\documentclass[12pt]{article}
% Load packages
\usepackage{amssymb,bm}
\usepackage{amsmath}
\usepackage{amsthm}
\usepackage{graphicx}
\usepackage[active]{srcltx} % Includes line number info into dvi file
\usepackage{hyperref} % for xdvi viewing
%\usepackage[dvipdfm]{hyperref} % makes links out of references in PDF, but
%does not work in xdvi. Incompatible with subcaption package (use pdflatex,
%after moving all figures into the same directory as the tex file)
\hypersetup{pdfborder=0 0 0}
% SET MARGINS
\setlength{\oddsidemargin}{0in}
\setlength{\evensidemargin}{-0.0625in}
\setlength{\textwidth}{6.5in}
\setlength{\topmargin}{-.5in}
\setlength{\textheight}{8.8in}

% Equation numbering

%Theorems and Proofs
\newtheorem{theorem}{{\sc Theorem}}[section]

\newtheorem{lemma}[theorem]{{\sc Lemma}}

\newtheorem{remark}[theorem]{Remark}

% Blackboard-bold font
\newcommand{\bb}[1]{\mathbb{ #1}}

%Useful miscellani

\newcommand{\Skew}{\mathrm{Skew}}

\newcommand{\hf}{\displaystyle\frac{1}{2}}
\newcommand{\nth}[1]{\displaystyle\frac{1}{#1}}

\newcommand{\grad}{\mathrm{grad}\,}
\newcommand{\dif}[2]{\displaystyle\frac{\partial #1}{\partial #2}}
\newcommand{\Grad}{\nabla}

\newcommand{\Md}{\partial}

% better than Av: is \dashint_{<domain>}

\def\XXint#1#2#3{{\setbox0=\hbox{$#1{#2#3}{\int}$ }
\vcenter{\hbox{$#2#3$ }}\kern-.6\wd0}}

\newcommand{\mat}[4]{\left[\begin{array}{cc}
\displaystyle{#1}&\displaystyle{#2}\\[1ex]
\displaystyle{#3}&\displaystyle{#4}\end{array}\right]}

%Hot words
\newcommand{\bc}{boundary condition}
\newcommand{\bvp}{boundary value problem}

\newcommand{\IFF}{if and only if }

%          ABCs

% Abbreviate definitions of greek symbols
\newcommand{\Ga}{\alpha}
\newcommand{\Gb}{\beta}

\newcommand{\Ge}{\epsilon}

\newcommand{\Gg}{\gamma}

\newcommand{\Gk}{\kappa}

\newcommand{\Gl}{\lambda}

\newcommand{\Gth}{\theta}

\newcommand{\GO}{\Omega}

% Abbreviate definitions of bold greek symbols
\bmdefine\BGa{\alpha}
\bmdefine\BGb{\beta}
\bmdefine\BGd{\delta}
\bmdefine\BGe{\epsilon}
\bmdefine\BGve{\varepsilon}
\bmdefine\BGf{\phi}
\bmdefine\BGvf{\varphi}
\bmdefine\BGg{\gamma}
\bmdefine\BGc{\chi}
\bmdefine\BGi{\iota}
\bmdefine\BGk{\kappa}
\bmdefine\BGl{\lambda}
\bmdefine\BGn{\eta}
\bmdefine\BGm{\mu}
\bmdefine\BGv{\nu}
\bmdefine\BGp{\pi}
\bmdefine\BGth{\theta}
\bmdefine\BGvth{\vartheta}
\bmdefine\BGr{\rho}
\bmdefine\BGvr{\varrho}
\bmdefine\BGs{\sigma}
\bmdefine\BGvs{\varsigma}
\bmdefine\BGt{\tau}
\bmdefine\BGj{\tau}
\bmdefine\BGu{\upsilon}
\bmdefine\BGo{\omega}
\bmdefine\BGx{\xi}
\bmdefine\BGy{\psi}
\bmdefine\BGz{\zeta}
\bmdefine\BGD{\Delta}
\bmdefine\BGF{\Phi}
\bmdefine\BGG{\Gamma}
\bmdefine\BGL{\Lambda}
\bmdefine\BGP{\Pi}
\bmdefine\BGT{\Theta}
\bmdefine\BGS{\Sigma}
\bmdefine\BGU{\Upsilon}
\bmdefine\BGO{\Omega}
\bmdefine\BGX{\Xi}
\bmdefine\BGY{\Psi}

% Abbreviate symbols for caligraphic letters

\newcommand{\CC}{{\mathcal C}}

\newcommand{\CP}{{\mathcal P}}

% Abbreviate symbols for bold caligraphic letters
\bmdefine\BCA{{\mathcal A}}
\bmdefine\BCB{{\mathcal B}}
\bmdefine\BCC{{\mathcal C}}
\bmdefine\BCD{{\mathcal D}}
\bmdefine\BCE{{\mathcal E}}
\bmdefine\BCF{{\mathcal F}}
\bmdefine\BCG{{\mathcal G}}
\bmdefine\BCH{{\mathcal H}}
\bmdefine\BCI{{\mathcal I}}
\bmdefine\BCJ{{\mathcal J}}
\bmdefine\BCK{{\mathcal K}}
\bmdefine\BCL{{\mathcal L}}
\bmdefine\BCM{{\mathcal M}}
\bmdefine\BCN{{\mathcal N}}
\bmdefine\BCO{{\mathcal O}}
\bmdefine\BCP{{\mathcal P}}
\bmdefine\BCQ{{\mathcal Q}}
\bmdefine\BCR{{\mathcal R}}
\bmdefine\BCS{{\mathcal S}}
\bmdefine\BCT{{\mathcal T}}
\bmdefine\BCU{{\mathcal U}}
\bmdefine\BCV{{\mathcal V}}
\bmdefine\BCW{{\mathcal W}}
\bmdefine\BCX{{\mathcal X}}
\bmdefine\BCY{{\mathcal Y}}
\bmdefine\BCZ{{\mathcal Z}}

% Abbreviate symbols for bold-face letters
\bmdefine\Bzr{ 0}
\bmdefine\Ba{ a}
\bmdefine\Bb{ b}
\bmdefine\Bc{ c}
\bmdefine\Bd{ d}
\bmdefine\Be{ e}
\bmdefine\Bf{ f}
\bmdefine\Bg{ g}
\bmdefine\Bh{ h}
\bmdefine\Bi{ i}
\bmdefine\Bj{ j}
\bmdefine\Bk{ k}
\bmdefine\Bl{ l}
\bmdefine\Bm{ m}
\bmdefine\Bn{ n}
\bmdefine\Bo{ o}
\bmdefine\Bp{ p}
\bmdefine\Bq{ q}
\bmdefine\Br{ r}
\bmdefine\Bs{ s}
\bmdefine\Bt{ t}
\bmdefine\Bu{ u}
\bmdefine\Bv{ v}
\bmdefine\Bw{ w}
\bmdefine\Bx{ x}
\bmdefine\By{ y}
\bmdefine\Bz{ z}
\bmdefine\BA{ A}
\bmdefine\BB{ B}
\bmdefine\BC{ C}
\bmdefine\BD{ D}
\bmdefine\BE{ E}
\bmdefine\BF{ F}
\bmdefine\BG{ G}
\bmdefine\BH{ H}
\bmdefine\BI{ I}
\bmdefine\BJ{ J}
\bmdefine\BK{ K}
\bmdefine\BL{ L}
\bmdefine\BM{ M}
\bmdefine\BN{ N}
\bmdefine\BO{ O}
\bmdefine\BP{ P}
\bmdefine\BQ{ Q}
\bmdefine\BR{ R}
\bmdefine\BS{ S}
\bmdefine\BT{ T}
\bmdefine\BU{ U}
\bmdefine\BV{ V}
\bmdefine\BW{ W}
\bmdefine\BX{ X}
\bmdefine\BY{ Y}
\bmdefine\BZ{ Z}

% Sans Serif capital letters

%%% Local Variables: 
%%% mode: latex
%%% TeX-master: t
%%% End: 

\begin{document}
\title{Korn inequalities for shells with zero Gaussian curvature}
\author{Yury Grabovsky\footnote{Temple University, Philadelphia, PA, yury@temple.edu}
\and Davit Harutyunyan\footnote{University of Utah, Salt Lake City, UT, davith@math.utah.edu}}
\maketitle

\begin{abstract}
  We consider shells with zero Gaussian curvature, namely shells with one principal curvature
  zero and the other one having a constant sign. Our particular interests are shells that are
  diffeomorphic to a circular cylindrical shell with zero
  principal longitudinal curvature and positive circumferential curvature,
  including, for example, cylindrical and conical shells with arbitrary convex
  cross sections. We prove that the best constant in the first Korn inequality
  scales like thickness to the power 3/2 for a wide range of boundary
  conditions at the thin edges of the shell. Our methodology is to prove, for
  each of the three mutually orthogonal two-dimensional cross-sections of the
  shell, a ``first-and-a-half Korn inequality''|a hybrid between the classical
  first and second Korn inequalities. These three two-dimensional inequalities
  assemble into a three-dimensional one, which, in turn, implies the
  asymptotically sharp first Korn inequality for the shell. This work is a
  part of mathematically rigorous analysis of extreme sensitivity of the
  buckling load of axially compressed cylindrical shells to shape imperfections.
\end{abstract}
%\tableofcontents

\section{Introduction}
\label{sec:1}
Classical first and second Korn inequalities have been known to play a central
role in the theory of linear elasticity and recently they have found very
important applications in the problems of buckling of slender structures
\cite{grtr07,grha15,grha16}. Let us recall the classical first and
second Kohn inequalities, that actually date back to 1908,
\cite{korn08,korn09}. To that end we denote
\[
\mathfrak{euc}(n)=\{\Bu:\mathbb R^n\to\mathbb R^n:\Bu(\Bx)=\BA\Bx+\Bb,\
\BA\in\Skew(\bb{R}^{n}),\ \Bb\in\bb{R}^{n}\}
\]
be the set of all infinitesimal motions, i.e., a Lie algebra of the group
of all Euclidean transformations (rigid body motions).  Let $\GO$ be an open
connected subset of $\bb{R}^{n}$ and $\Bu\in W^{1,2}(\GO;\bb{R}^{n})$. We
denote\footnote{We reserve more streamlined notations $\Grad\Bu$ and $e(\Bu)$
  for ``simplified'' gradient and symmetrized gradient, respectively, that
  will be our main characters in the technical part of the paper.} by
$\grad\Bu$ and $(\grad\Bu)_{\rm sym}$ the gradient and the symmetric part of
the gradient, respectively, of a vector field $\Bu$. It is well-known that $(\grad\Bu)_{\rm sym}=0$ in
$\GO$ (in the sense of distributions) \IFF $\Bu\in\mathfrak{euc}(n)$. This is
an immediate consequence of a simple observation (also very well-known) that
all partial derivatives of the gradient $\BG=\grad\Bu$ can be expressed as a
linear combination of partial derivatives of the symmetric part of the
gradient $\BE=(\grad\Bu)_{\rm sym}$:
\[
\dif{G_{jk}}{x^{i}}=\dif{E_{jk}}{x^{i}}+\dif{E_{ij}}{x^{k}}-\dif{E_{ik}}{x^{j}}.
\]

The classical first Korn inequality (e.g., as
stated in \cite{oshyo92}) quantifies this result by describing how large
$(\grad\Bu)_{\rm sym}$ must be if $\Bu$ lies in a closed subspace $V\subset
W^{1,2}(\GO;\bb{R}^{n})$ that has trivial intersection with
$\mathfrak{euc}(n)$. If $\GO$ is a Lipschitz domain, then there exists a
constant $C(\GO,V)$, such that for every $\Bu\in V$
\begin{equation}
\label{1.2}
\|(\grad\Bu)_{\rm sym}\|^2\ge C(\GO,V)\|\grad\Bu\|^2,
\end{equation}
where $\|\cdot\|$ is the $L^{2}$-norm.
The Korn constant $C(\Omega,V)$ measures the distance between the subspace $V$ and
$\mathfrak{euc}(n)$.
The classical second Korn inequality asserts that the standard $W^{1,2}$ norm
topology can be equivalently defined by replacing $\grad\Bu$ with $(\grad\Bu)_{\rm sym}$:
\begin{equation}
\label{1.1}
\|\grad\Bu\|^2\le C(\GO)(\|(\grad\Bu)_{\rm sym}\|^2+\|\Bu\|^2),\quad\Bu\in W^{1,2}(\GO;\bb{R}^{n}).
\end{equation}
Originally, Korn inequalities were used to prove existence, uniqueness and
well-posedness of \bvp s of linear elasticity (see e.g.,
\cite{love27,ciar00}). Nowadays, often, as in our particular case, it is the
best Korn constant $C(\GO,V)$ in the first Korn inequality that is of central
importance (e.g., \cite{cot89,naza04,naza08,pato12,pato14,lemu16}).
Specifically, we are interested in the asymptotic behavior of the Korn
constant $K(\GO_h,V_{h})$ for shells with zero Gaussian curvature as a function of their
thickness $h$ for subspaces $V_{h}$ of $W^{1,2}$ functions satisfying various
\bc s at the thin edges of the shell. In \cite{grtr07,grha16} we have shown that
$K(\Omega_h,V_{h})$ represents an absolute lower bound on safe loads for any slender
structure. For a classical circular cylindrical shell we have proved in
\cite{grha14} that $K(\Omega_h,V_{h})\sim h^{3/2}$ for a broad class of \bc s at the
thin edges of the shell.

The motivation for this work comes from the fact that the experimentally
measured buckling loads of axially compressed cylindrical shells behave in a
paradoxical way, dramatically disagreeing with predictions of classical shell
theory. The universal consensus is that such behavior is due to the extreme
sensitivity of shells to imperfections of shape and load. This study is a part
of rigorous analytical investigation of the influence of small changes in
shape on the structural behavior of cylindrically-shaped shells. It looks like
(and this will be addressed in future work) the determining factor of the
effect of shape imperfections is the Gaussian curvature of the actual imperfect
shell as the Ansatzen suggest in \cite{tosm01}. In this paper we show that if the
shell has a vanishing principal curvature (yielding zero Gaussian curvature), 
like circular cylindrical shells, then the scaling of the Korn constant 
$K(\GO_h,V_{h})$ will remain unaffected, provided the non-zero principal curvature
has a constant sign. Our analysis also shows that if both principal curvatures are zero on any open
subset of the shell's mid-surface, then $K(\GO_h,V_{h})\sim h^{2}$. We conjecture
that $K(\GO_h,V_{h})\sim h$ for shells of uniformly positive Gaussian curvature, while
$K(\GO_h,V_{h})\sim h^{4/3}$ if the Gaussian curvature is negative on any open subset
of the shell's middle surface, as suggested by test functions constructed in
\cite{tosm01}. These conjectures will be addressed elsewhere.

The goal of this paper is to show that the tools developed in \cite{grha14} for
circular cylindrical shells, and extended and developed further in \cite{haru14},
possess enough flexibility to be applicable to a
wide family of shells and in particular cylindrically-shaped shells (the ones, that
have no boundary in one of the principal directions).
The main idea is to first prove an inequality that is a hybrid between the first and second Korn inequalities
(we call it ``first-and-a-half Korn inequality'' for this reason) by
``assembling'' it from its two-dimensional versions corresponding to
cross-sections of the shell by curvilinear coordinate surfaces.
The first Korn inequality is then a consequence of the first-and-a-half
Korn inequality and an estimate on the normal component of $\Bu\in V_{h}$.  We
believe that this general methodology will work for broad classes of shells, 
even though the Gaussian curvature does affect the validity of some of the
technical steps in the proof, which must be adjusted to the particular case. 
In particular, the assumption of zero Gaussian
curvature is essential for all main results in Section~\ref{sec:3}.

\section{Preliminaries}
\setcounter{equation}{0}
\label{sec:2}
Consider a shell whose mid-surface is of class $C^2.$
Suppose $z$ and $\Gth$ are coordinates on the mid-surface of the shell, such
that $z=$constant and $\Gth=$constant are the lines of principal curvatures. Here
$\Gth$ will denote the circumferential coordinate and $z$--the longitudinal for 
cylindrically shaped shells. In the case of a straight circular cylinder 
$\Gth$ and $z$ are the standard cylindrical coordinates. Let
$\Br(\Gth,z)$ be the position vector of the shell's mid-surface. 
Introducing the normal coordinate $t$, we obtain the set of orthogonal
curvilinear coordinates $(t,\Gth,z)$, related to Cartesian coordinates via
\[
\Bx=\BR(t,\Gth,z)=\Br(z,\Gth)+t\Bn(z,\Gth),
\]
where $\Bn$ is the \emph{outward} unit normal, and $\BR(t,\Gth,z)$ is the
position vector of a point in space with coordinates $(t,\Gth,z)$. In this
paper we will study shells of uniform thickness $h$, given in $(t,\Gth,z)$
coordinates by
\begin{equation}
  \label{Ch}
  \CC_{h}=\{\BR(t,\Gth,z): z\in[L_{-},L_{+}],\ t\in I_{h},\ \Gth\in[0,p)\},\qquad
I_{h}=\left[-\frac{h}{2},\frac{h}{2}\right].
\end{equation}
We denote
\[
A_{z}^{2}=|\Br_{,z}|^{2},\qquad A_{\Gth}^{2}=|\Br_{,\Gth}|^{2}
\]
the two nonzero components\footnote{The principal directions are mutually
  orthogonal.} of the metric tensor of the middle surface. The two principal
curvatures will be denoted by $\Gk_{z}$ and $\Gk_{\Gth}$. Their signs are
chosen in such a way that $k_{z}$ and $k_{\Gth}$ are positive for a
barrel-shaped shells, like a sphere. The four functions $A_{\Gth}$, $A_{z}$,
$\Gk_{\Gth}$, and $\Gk_{z}$ satisfy the Codazzi-Gauss relations (see
e.g. \cite[Section~1.1]{tosm01})
\begin{equation}
  \label{Codazzi}
  \dif{\Gk_{z}}{\Gth}=(\Gk_{\Gth}-\Gk_{z})\frac{A_{ z,\Gth}}{A_{ z}},\qquad
\dif{\Gk_{\Gth}}{ z}=(\Gk_{z}-\Gk_{\Gth})\frac{A_{\Gth, z}}{A_{\Gth}},
\end{equation}
\begin{equation}
  \label{Gauss}
  \dif{}{ z}\left(\frac{A_{\Gth, z}}{A_{ z}}\right)+
\dif{}{\Gth}\left(\frac{A_{ z,\Gth}}{A_{\Gth}}\right)=-A_{ z}A_{\Gth}\Gk_{z}\Gk_{\Gth},
\end{equation}
and define the Levi-Civita connection on the
middle surface of the shell via the following derivation formulas
\[
\Grad_{\Be_{z}}\Be_{z}=-\nth{A_{ z}A_{\Gth}}\dif{A_{ z}}{\Gth}\Be_{\Gth}-\Gk_{z}\Bn,\qquad
\Grad_{\Be_{z}}\Be_{\Gth}=\nth{A_{ z}A_{\Gth}}\dif{A_{ z}}{\Gth}\Be_{z},\qquad
\Grad_{\Be_{z}}\Bn=\Gk_{z}\Be_{z},
\]
\[
\Grad_{\Be_{\Gth}}\Be_{\Gth}=-\nth{A_{ z}A_{\Gth}}\dif{A_{\Gth}}{ z}\Be_{z}-\Gk_{\Gth}\Bn,\qquad
\Grad_{\Be_{\Gth}}\Be_{z}=\nth{A_{ z}A_{\Gth}}\dif{A_{\Gth}}{ z}\Be_{\Gth},\qquad
\Grad_{\Be_{\Gth}}\Bn=\Gk_{\Gth}\Be_{\Gth}.
\]
Using these formulas we can compute the components of $\grad\Bu$ in the
orthonormal basis $\Be_{t}$, $\Be_{\Gth}$, $\Be_{z}$:
\[
\grad\Bu=
\begin{bmatrix}
  u_{t,t} & \dfrac{u_{t,\Gth}-A_{\Gth}\Gk_{\Gth}u_{\Gth}}{A_{\Gth}(1+t\Gk_{\Gth})} &
\dfrac{u_{t,z}-A_{z}\Gk_{z}u_{z}}{A_{z}(1+t\Gk_{z})}\\[3ex]
u_{\Gth,t}  &
\dfrac{A_{z}u_{\Gth,\Gth}+A_{z}A_{\Gth}\Gk_{\Gth}u_{t}+A_{\Gth,z}u_{z}}{A_{z}A_{\Gth}(1+t\Gk_{\Gth})} &
\dfrac{A_{\Gth}u_{\Gth,z}-A_{z,\Gth}u_{z}}{A_{z}A_{\Gth}(1+t\Gk_{z})}\\[3ex]
u_{ z,t}  & \dfrac{A_{z}u_{z,\Gth}-A_{\Gth,z}u_{\Gth}}{A_{z}A_{\Gth}(1+t\Gk_{\Gth})} &
\dfrac{A_{\Gth}u_{z,z}+A_{z}A_{\Gth}\Gk_{z}u_{t}+A_{z,\Gth}u_{\Gth}}{A_{z}A_{\Gth}(1+t\Gk_{z})}
\end{bmatrix}.
\]

We will now specialize to the particular case of zero Gaussian curvature
$\Gk_{z}=0$. In this case, equations
(\ref{Codazzi})--(\ref{Gauss}) can be solved explicitly in terms of four
arbitrary smooth functions $B(z)$, $a(\Gth)$, $b(\Gth)$, $c(\Gth)$:
\begin{equation}
  \label{As}
  A_{z}=B'(z),\qquad A_{\Gth}=a(\Gth)B(z)+b(\Gth),\qquad\Gk_{\Gth}=\frac{c(\Gth)}{A_{\Gth}}.
\end{equation}
We require that $A_{z}$, $A_{\Gth}$ and $c(\Gth)$ be strictly positive
functions of their variables on the mid-surface of the shell.
Hence, for shells of zero Gaussian curvature the formula for $\grad\Bu$
simplifies:
\begin{equation}
  \label{gradk0}
\grad\Bu=
\begin{bmatrix}
  u_{t,t} & \dfrac{u_{t,\Gth}-c(\Gth)u_{\Gth}}{A_{\Gth}+tc(\Gth)} &
\dfrac{u_{t,z}}{A_{z}}\\[3ex]
u_{\Gth,t}  &
\dfrac{u_{\Gth,\Gth}+c(\Gth)u_{t}+a(\Gth)u_{z}}{A_{\Gth}+tc(\Gth)} &
\dfrac{u_{\Gth,z}}{A_{z}}\\[3ex]
u_{ z,t}  & \dfrac{u_{z,\Gth}-a(\Gth)u_{\Gth}}{A_{\Gth}+tc(\Gth)} &
\dfrac{u_{z,z}}{A_{z}}
\end{bmatrix}.
\end{equation}
In the case of shells the thickness variable $t$ is uniformly small. We therefore
introduce the simplified gradient
\begin{equation}
  \label{gradsimp}
  \Grad\Bu=
\begin{bmatrix}
  u_{t,t} & \dfrac{u_{t,\Gth}-c(\Gth)u_{\Gth}}{A_{\Gth}} &
\dfrac{u_{t,z}}{A_{z}}\\[3ex]
u_{\Gth,t}  &
\dfrac{u_{\Gth,\Gth}+c(\Gth)u_{t}+a(\Gth)u_{z}}{A_{\Gth}} &
\dfrac{u_{\Gth,z}}{A_{z}}\\[3ex]
u_{ z,t}  & \dfrac{u_{z,\Gth}-a(\Gth)u_{\Gth}}{A_{\Gth}} &
\dfrac{u_{z,z}}{A_{z}}
\end{bmatrix}.
\end{equation}
We note in (\ref{gradsimp}) the components
$u_{t}$, $u_{\Gth}$ and $u_{z}$ are still functions of $(t,\Gth,z)$.

To be more specific we give two examples of zero Gaussian curvature shells:
cylinders and cones. A cylinder is described by a simple, smooth closed curve
of length $p$ in the $xy$-plane. Let $\BGr(\Gth)$, $\Gth\in[0,p)$ be the
position vector of this curve, parametrized by its arc-length. The position
vector of the middle surface of the shell is then given by
$\Br(\Gth,z)=\BGr(\Gth)+z\Be_{z}$, where $\Be_{z}$ is the unit vector
perpendicular to the $xy$=plane, i.e. the unit vector in the $z$-direction. It
is easy to verify that $\Gth=$constant and $z=$constant are lines of curvature
and $\Gk_{\Gth}=\Gk(\Gth)$ is the curvature of curve $\BGr(\Gth)$ in the
plane, whose sign is chosen to be positive for a circle.

A second example is a cone with vertex at the origin. A cone is described by a
a simple, smooth closed curve of length $p$ lying in the northern hemisphere of
a unit sphere centered at the origin. Let $\BGs(\Gth)$, $\Gth\in[0,p)$ be the
arc-length parametrization of this curve. In this case the middle surface of the shell is given by
$\Br(\Gth,z)=z\BGs(\Gth)$. Once again, it is easy to verify that
$\Gth=$constant and $z=$constant are lines of curvature and
\[
\Gk_{\Gth}=\frac{(\BGs(\Gth),\BGs'(\Gth),\BGs''(\Gth))}{z},
\]
where $(\Ba,\Bb,\Bc)=\Ba\cdot(\Bb\times\Bc)$ is the triple-product of 3
vectors in space. We summarize the data for cylinders and cones in Table~\ref{tab:cylcone}.
\begin{table}[h]
  \centering
  \begin{tabular}[h]{|c|c|c|c|c|}
    \hline
 & $B(z)$ & $a(\Gth)$ & $b(\Gth)$ & $c(\Gth)$\\
%\hline
\hline
cylinders & $z$ & 0 & 1 & $\Gk(\Gth)$\\
\hline
cones & $z$ & 1 & 0 & $(\BGs(\Gth),\BGs'(\Gth),\BGs''(\Gth))$\\
\hline
  \end{tabular}
  \caption{Functions $B(z)$, $a(\Gth)$, $b(\Gth)$ and $c(\Gth)$ for cylinders and cones.}
  \label{tab:cylcone}
\end{table}

In this paper all norms $\|\cdot\|$ are $L^{2}$ norms. However because of
the curvilinear coordinates we will use several different flavors of the
$L^{2}$ inner product and the corresponding norm. For
$f,g\colon\CC_h\to\mathbb R$ we define the $L^{2}$ inner product
\[
(f,g)_{\CC_h}=\int_{\CC_h}f(\Bx)g(\Bx)d\Bx=\int_{I_{h}}\int_{l}^L\int_{0}^{p}A_zA_\Gth
fgd\Gth dzdt,
\]
which gives rise to the norm $\|f\|$.
\begin{equation}
  \label{normf}
  \|f\|^{2}=(f,f)=\int_{I_{h}}\int_{l}^L\int_{0}^{p}A_zA_\Gth f^{2}d\Gth dzdt.
\end{equation}
In cross-sections $\Gth$=constant we use
\begin{equation}
  \label{normth}
  \|f\|^{2}_{\Gth}=\int_{I_{h}}\int_{l}^{L}A_{z}f(t,\Gth,z)^{2}dzdt.
\end{equation}
We will also use the Euclidean version of the norm on cross-sections
\begin{equation}
  \label{eucnorm}
  \|f\|_{0}^{2}=\int_{\Ga_{1}}^{\Ga_{2}}\int_{\Gb_{1}}^{\Gb_{2}}f(\Ga,\Gb,\Gg)d\Ga d\Gb,
\end{equation}
where $\{\Ga,\Gb,\Gg\}=\{t,\Gth,z\}$ and $\Ga_{i}$, $\Gb_{i}$, $i=1,2$ are the
corresponding limits of integration. In each case it will be clear which
variable $t$, $\Gth$ or $z$ plays the role of the fixed variable $\Gg$. Of
course, due to uniform positivity of $A_{\Gth}$ and $A_{z}$ the norms
$\|f\|_{\Gg}$ and $\|f\|_{0}$ are obviously equivalent. In particular, all
inequalities involving one type of norm will also be valid for another.
Finally, all constants that are independent of $\Bu$ and $h$ will be denoted
by $C$. Once this is understood, such abuse of notation does not lead to any
ambiguity.

\section{Main results}
\setcounter{equation}{0}
\label{sec:3}
We formulate our Korn inequalities for vector fields $\Bu$ satisfying specific
\bc s at the two edges of the shell. We define
\begin{align}
\label{3.2}
V_{h}^1&=\{\BGf\in W^{1,2}(\CC_{h};\mathbb R^3):\BGf(r,\Gth,0)=\Bzr,\ \phi_{r}(r,\Gth,L)=\phi_{\Gth}(r,\Gth,L)=0\}\\ \nonumber
&\cap\{\BGf\in W^{1,2}(\CC_{h};\mathbb R^3): \BGf_\Gth,\BGf_z \ \text{are} \ p-\text{periodic in} \ \Gth\}.
\end{align}
and
\begin{align}
\label{3.3}
V_{h}^2&=\{\BGf\in W^{1,2}(\CC_{h};\mathbb R^3):\phi_{\Gth}(r,\Gth,0)=\phi_{z}(r,\Gth,0)=
\phi_{\Gth}(r,\Gth,L)=\phi_{z}(r,\Gth,L)=0\}\\ \nonumber
&\cap\{\BGf\in W^{1,2}(\CC_{h};\mathbb R^3): \BGf_\Gth,\BGf_z \ \text{are} \ p-\text{periodic in} \ \Gth\}.
\end{align}
We state our main results as a sequence of related theorems.
\begin{theorem}
\label{th:3.1}
Suppose $\Gk_{z}=0$ on $\CC_{h}$. Then there exist a constant $C$, independent
of $h$, such that for every $\Bu\in V_{h}^{1}$
\begin{equation}
\label{3.9}
\|\grad\Bu\|^2\leq C\left(\frac{\|u_t\|\|(\grad\Bu)_{\rm sym}\|}{h}+\|(\grad\Bu)_{\rm sym}\|^2\right),
\end{equation}
\begin{equation}
  \label{3.11}
\|\grad\Bu\|^2\leq \frac{C}{h^2}\|(\grad\Bu)_{\rm sym}\|^2,
\end{equation}
for all $h\in(0,1)$.
\end{theorem}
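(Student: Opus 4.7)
\textbf{Reduction of \eqref{3.11} to \eqref{3.9}.} Since $\Bu(t,\Gth,0)=\Bzr$ for every $\Bu\in V_h^1$ and $A_z$ is bounded below, the one-dimensional Poincar\'e inequality in the $z$ variable gives $\|u_t\|\le C\|u_{t,z}\|\le C\|\grad\Bu\|$. Substituting this into the cross term of \eqref{3.9} and using $ab\le\Ge a^2+b^2/(4\Ge)$, the term $C\Ge\|\grad\Bu\|^2$ can be absorbed into the left-hand side and what remains is a bound of the form $(C/h^2)\|(\grad\Bu)_{\rm sym}\|^2$, which is \eqref{3.11}.

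\textbf{Plan for \eqref{3.9}: assembly from three families of slices.} The shell $\CC_h$ is foliated by the three families of coordinate surfaces $\{t=\text{const}\}$, $\{\Gth=\text{const}\}$, and $\{z=\text{const}\}$. On each family I would prove a two-dimensional ``first-and-a-half'' Korn inequality that controls the two in-slice partial derivatives of the three components of $\Bu$ by the corresponding in-slice entries of $(\Grad\Bu)_{\rm sym}$ plus a single cross term of the shape $h^{-1}\|u_t\|_0\,\|(\Grad\Bu)_{\rm sym}\|_0$. Integrating these two-dimensional bounds against the remaining (transverse) variable, using the norm equivalence $\|\cdot\|_\Gg\sim\|\cdot\|_0$ from Section~\ref{sec:2} together with the elementary estimate $|\grad\Bu-\Grad\Bu|\le Ch|\Grad\Bu|$ (valid because $\Gk_z=0$ and $|t|\le h/2$), collects all nine partial derivatives of $\Bu$ and delivers \eqref{3.9}.

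\textbf{The 2D ingredients.} On each $t=\text{const}$ slice the two tangential directions are of comparable extent, so a classical anisotropic Korn inequality on $[0,p)\times[L_-,L_+]$, with the boundary and periodicity conditions imposed by \eqref{3.2} ruling out the two-dimensional infinitesimal isometries, handles the in-slice bound cleanly. The $\Gth=\text{const}$ and $z=\text{const}$ slices are the genuinely delicate case, being thin rectangles of width $h$ in $t$, on which the standard first Korn constant degenerates like $h^{-2}$. The device that rescues the argument is the explicit curvature coupling in the symmetrized form of \eqref{gradsimp}: the term $c(\Gth)u_t$ sitting inside the $(\Gth,\Gth)$-component of the strain, together with the strict positivity of $c(\Gth)$ coming from the constant-sign hypothesis on $\Gk_\Gth$, allows one to recover $u_{\Gth,\Gth}$ (and analogously the shear combinations) from $(\Grad\Bu)_{\rm sym}$ modulo a remainder proportional to $\|u_t\|$; the thin length scale $h$ then enters the final inequality only through this $u_t$-remainder.

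\textbf{Main obstacle.} The real difficulty is to extract from the 2D Korn argument on each thin strip exactly the cross term $h^{-1}\|u_t\|\,\|(\Grad\Bu)_{\rm sym}\|$ and nothing weaker, since a cross term of the shape $h^{-1}\|\grad\Bu\|\,\|(\Grad\Bu)_{\rm sym}\|$ would fail to self-improve to \eqref{3.11}. Arranging the auxiliary uses of Young's and Poincar\'e's inequalities on each slice so that they never reintroduce $\|\grad\Bu\|$ on the right, while simultaneously exploiting $c(\Gth)>0$ to invert the curvature coupling in a quantitatively controlled way, is the technical heart of the argument and the step where I would expect the proof to cost the most care.
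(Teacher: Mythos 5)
Your high-level architecture matches the paper: prove a ``first-and-a-half'' Korn inequality for the simplified gradient $\Grad\Bu$ by assembling three two-dimensional estimates on the $t=\mathrm{const}$, $\Gth=\mathrm{const}$, and $z=\mathrm{const}$ slices, then pass from $\Grad\Bu$ to $\grad\Bu$ using $\|\grad\Bu-\Grad\Bu\|\le Ch\|\grad\Bu\|$. Your reduction of \eqref{3.11} from \eqref{3.9} is also sound: Poincar\'e in $z$ gives $\|u_t\|\le C\|\grad\Bu\|$ for $\Bu\in V_h^1$, and then Young plus absorption converts the cross term into $h^{-2}\|(\grad\Bu)_{\rm sym}\|^2$ (the paper does this with the sharper intermediate bound $\|u_t\|\le C\|(\grad\Bu)_{\rm sym}\|/h$, equation (4.35), but both routes yield the same scaling).

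However, the mechanism you give for closing the estimate on the thin slices is the wrong one for \emph{this} theorem. You invoke ``strict positivity of $c(\Gth)$ coming from the constant-sign hypothesis on $\Gk_\Gth$'' to invert the curvature coupling $c(\Gth)u_t$ in $e(\Bu)_{\Gth\Gth}$ and so bound $u_t$. But Theorem~\ref{th:3.1} makes \emph{no} sign assumption on $\Gk_\Gth$ — that hypothesis belongs to Theorem~\ref{th:3.2}, and the curvature-inversion trick (equation \eqref{uethth} in the paper) is precisely the device the paper reserves for that theorem, where the boundary data are weaker. What makes Theorem~\ref{th:3.1} work without it is the clamped condition $\phi_r(r,\Gth,0)=0$ built into $V_h^1$: this $u_t|_{z=0}=0$ allows a Poincar\'e inequality $\|u_t\|\le C\|u_{t,z}\|=C\|(\Grad\Bu)_{tz}\|$, and the $\Gth=\mathrm{const}$ slice estimate (Lemma~\ref{lem:Korntheta}) then bounds this by $C(h^{-1}\|u_t\|\|e(\Bu)\|+\|e(\Bu)\|^2)$, which self-improves to $\|u_t\|\le Ch^{-1}\|(\grad\Bu)_{\rm sym}\|$. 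This is the crucial step that uses $V_h^1$ rather than $V_h^2$, and it is entirely absent from your sketch. Relatedly, your description of the $t=\mathrm{const}$ slice as amenable to a ``classical anisotropic Korn inequality'' undersells what is actually needed: the paper's Lemma~\ref{lem:tconst} is \emph{not} a Korn inequality in the usual sense — it is an integration-by-parts identity converting $\int u_{\Gth,z}u_{z,\Gth}$ into $\int u_{\Gth,\Gth}u_{z,z}$, yielding an estimate with an explicit $\|u_t\|$ remainder (from the $c(\Gth)u_t$ and $a(\Gth)u_z$ terms sitting inside $e(\Bu)_{\Gth\Gth}$), which is essential because $\|u_\Gth\|$ is later bounded through this slice. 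Without replacing the (inapplicable) curvature positivity by the $V_h^1$ boundary condition, and without the precise form of the $t=\mathrm{const}$ estimate, the assembly does not close.
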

If additionally, the curvature $\kappa_\Gth$ vanishes on an open subset of the
middle surface of the shell then, according to Theorem~\ref{th:3.3} the bound
in (\ref{3.11}) is asymptotically sharp as $h\to 0$.
If the curvature $\kappa_\Gth$ does not change sign (i.e. stays uniformly
positive for cylindrically-shaped shells), then the first Korn inequality (\ref{3.11}) can be improved.
\begin{theorem}
\label{th:3.2}
Suppose that $\Gk_{z}=0$ and $\Gk_{\Gth}>0$ on
$\CC_{h}$. Then there exist a constant $C$, independent of $h$, such that for
every $\Bu\in V_{h}^1\cup V_{h}^2$ inequalities (\ref{3.9}) and
\begin{equation}
  \label{3.10}
\|\grad\Bu\|^2\leq \frac{C}{h\sqrt h}\|(\grad\Bu)_{\rm sym}\|^2,
\end{equation}
hold for all $h\in(0,1)$.
\end{theorem}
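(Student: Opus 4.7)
The plan is to derive (\ref{3.10}) by combining the first-and-a-half Korn inequality (\ref{3.9}) with a sharper bound on the normal component $u_t$ that decisively uses the positivity of $\Gk_\Gth$. Since Theorem~\ref{th:3.1} records (\ref{3.9}) only for $V_h^1$, a preliminary step is to check that its proof extends to $V_h^2$: the assembly of (\ref{3.9}) from two-dimensional cross-sectional inequalities relies only on $p$-periodicity in $\Gth$ and on Poincar\'e bounds of the shape $\|u_\Gth\|,\|u_z\|\le C\|\grad\Bu\|$ (which follow from tangential clamping at $z=0,L$), both of which hold for $V_h^2$ as well. A routine inspection of the proof of Theorem~\ref{th:3.1} should confirm this extension.

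The core new ingredient is an estimate of the shape $\|u_t\|^{2}\le (C/h)\|(\grad\Bu)_{\rm sym}\|^{2}+C\|(\grad\Bu)_{\rm sym}\|\,\|\grad\Bu\|$, valid on $V_h^1\cup V_h^2$. To derive it I start from the pointwise identity read off the $(\Gth,\Gth)$ entry of (\ref{gradk0}),
\[
c(\Gth)\,u_t \;=\; (A_\Gth+tc(\Gth))\,E_{\Gth\Gth} \;-\; u_{\Gth,\Gth} \;-\; a(\Gth)\,u_z,
\]
where $E_{\Gth\Gth}$ is the corresponding diagonal entry of $(\grad\Bu)_{\rm sym}$. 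The hypothesis $\Gk_\Gth=c/A_\Gth>0$ makes $c(\Gth)\ge c_0>0$ uniformly. Multiplying by $A_\Gth A_z u_t$ and integrating over $\CC_h$, the left side dominates $c_0\|u_t\|^{2}$. The $E_{\Gth\Gth}$ and $u_z$ terms on the right are handled by Cauchy--Schwarz combined with the Poincar\'e bound $\|u_z\|\le C\|\grad\Bu\|$. The critical term $\int u_{\Gth,\Gth}\,u_t\,A_\Gth A_z$ is integrated by parts in $\Gth$ (using $p$-periodicity), producing $\int u_\Gth\,u_{t,\Gth}\,A_\Gth A_z$ up to lower-order contributions; the derivative $u_{t,\Gth}$ is then re-expressed via the $(t,\Gth)$ off-diagonal entry of (\ref{gradk0}) as a linear combination of $E_{t\Gth}$, $u_{\Gth,t}$ and $u_\Gth$. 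The remaining integral $\int A_\Gth^{2}A_z u_\Gth\,u_{\Gth,t}$ is integrated by parts in $t$, which produces a boundary contribution on the lateral surface $t=\pm h/2$. Since the lateral surfaces carry no imposed conditions, I control the trace of $u_\Gth$ there by the standard estimate $\|f|_{t=\pm h/2}\|_{L^{2}}^{2}\le C(h^{-1}\|f\|^{2}+\|f\|\,\|f_t\|)$ applied to $f=u_\Gth$, and this is precisely the source of the $h^{-1}$ weight in the final bound. Assembling everything with Young's inequality yields the advertised estimate on $\|u_t\|$.

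Substituting this $\|u_t\|$ estimate into (\ref{3.9}) produces terms of the form $h^{-3/2}\|(\grad\Bu)_{\rm sym}\|^{2}$, $h^{-1}\|(\grad\Bu)_{\rm sym}\|^{3/2}\|\grad\Bu\|^{1/2}$, and $\|(\grad\Bu)_{\rm sym}\|^{2}$; the middle term is handled by Young's inequality with exponents $(4,4/3)$, absorbing a fraction of $\|\grad\Bu\|^{2}$ back into the left side and leaving $\|\grad\Bu\|^{2}\le Ch^{-3/2}\|(\grad\Bu)_{\rm sym}\|^{2}$, which is (\ref{3.10}). The main obstacle I expect is the boundary trace at $t=\pm h/2$: the trace of $u_\Gth$ must be bounded purely in interior norms without losing more than a factor of $h^{-1/2}$. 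This is precisely where $\Gk_\Gth>0$ is used decisively---it keeps $c(\Gth)$ uniformly coercive so that the $E_{\Gth\Gth}$ identity becomes a productive inequality for $u_t$ rather than a sign-indefinite one. A secondary concern is the extension of (\ref{3.9}) to $V_h^2$, where the weaker boundary conditions at $z=0$ require one to verify that each step in the assembly of the first-and-a-half inequality remains available.
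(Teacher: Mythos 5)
Your plan—exploit $\kappa_\Gth>0$ to get an improved bound on $\|u_t\|$ and feed it into the first-and-a-half inequality—is the right idea and matches the paper's strategy. But the $u_t$ estimate you claim, $\|u_t\|^2\le (C/h)\|(\grad\Bu)_{\rm sym}\|^2+C\|(\grad\Bu)_{\rm sym}\|\|\grad\Bu\|$, does not follow from your own manipulations, and the extra step you insert kills the $h^{3/2}$ scaling.

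The trouble is the treatment of $\int u_\Gth\,u_{t,\Gth}$. You decompose $u_{t,\Gth}$ into $2A_\Gth E_{t\Gth}-A_\Gth u_{\Gth,t}+c\,u_\Gth$ (symmetric part plus $u_{\Gth,t}$), and then integrate $\int A_\Gth^2 A_z u_\Gth u_{\Gth,t}$ by parts in $t$, picking up a boundary trace at $t=\pm h/2$. Your trace inequality contributes a term $Ch^{-1}\|u_\Gth\|^2$. Combining this with the $t=\mathrm{const}$ cross-section bound $\|u_\Gth\|^2\le C\|e\|(\|e\|+\|u_t\|)$ produces $Ch^{-1}\|e\|\|u_t\|$, and after Young this becomes $Ch^{-2}\|e\|^2$. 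So the argument actually yields $\|u_t\|^2\le Ch^{-2}\|(\grad\Bu)_{\rm sym}\|^2+\dots$, not the $h^{-1}$ weight you assert. Substituting $\|u_t\|\lesssim h^{-1}\|(\grad\Bu)_{\rm sym}\|$ into (\ref{3.9}) reproduces only the $h^{-2}$ scaling of Theorem~\ref{th:3.1}, not the improved $h^{-3/2}$ of (\ref{3.10}). In short: every $h^{-1}$ you introduce before the final optimization costs you a full power of $h$ at the end, and the trace inequality at $t=\pm h/2$ is exactly such an introduction.

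The fix is to not decompose $u_{t,\Gth}$ into symmetric and antisymmetric pieces, and hence not integrate by parts in $t$ at all. The paper uses the full off-diagonal component directly, $u_{t,\Gth}=A_\Gth(\Grad\Bu)_{t\Gth}+c(\Gth)u_\Gth$; after the single integration by parts in $\Gth$ (via periodicity) one lands on the clean, $h$-free estimate $\|u_t\|^2\le C\bigl(\|e(\Bu)\|^2+\|(\Grad\Bu)_{t\Gth}\|\,\|u_\Gth\|\bigr)$, which is (\ref{utbd2}). The $h^{3/2}$ scaling then comes from Lemma~\ref{lem:K2}: a weighted Young inequality $\|(\Grad\Bu)_{t\Gth}\|\|u_\Gth\|\le\epsilon^2\|\Grad\Bu\|^2+\epsilon^{-2}\|u_\Gth\|^2$, bootstrapping $\|u_\Gth\|^2$ through (\ref{utheta}) and $\|u_t\|$ through (\ref{4.1}), and finally choosing $\epsilon=h^{1/4}$ to balance $h^{-1}\epsilon^{-2}$ against $h^{-2}\epsilon^2$. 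This balance is only available precisely because the preliminary $u_t$ bound carries no negative power of $h$. A minor additional remark: your plan presupposes (\ref{3.9}) is available on $V_h^2$ at the outset, but in the paper (\ref{3.9}) for $V_h^2$ is actually derived \emph{from} (\ref{3.10}) at the end; the argument must be run through the simplified-gradient version (\ref{4.1}) to avoid circularity.
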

In fact, inequalities (\ref{3.9}) and (\ref{3.10}) are asymptotically sharp.
\begin{theorem}[Existence of optimal ansatz]~
\label{th:3.3}
\begin{enumerate}
\item[(i)] Suppose that assumptions of Theorem~\ref{th:3.1} are
  satisfied. Suppose, additionally that the curvature $\kappa_\Gth$ vanishes
  on an open subset of the middle surface of the shell. Then there exist
  functions $\Bu^{h}\in V_{h}^1\cap V_{h}^2$ and a constant $C$, independent
  of $h$, for which
  \begin{equation}
    \label{Kirch}
    \|(\grad\Bu^{h})_{\rm sym}\|^2\le Ch^{2}\|\grad\Bu^{h}\|^2.
  \end{equation}
\item[(ii)] Suppose that assumptions of Theorem~\ref{th:3.2} are
  satisfied. Then there exist functions $\Bu^{h}\in V_{h}^1\cap
V_{h}^2$ and a constant $C$, independent of $h$, for which
\begin{equation}
  \label{cyl}
  \|(\grad\Bu^{h})_{\rm sym}\|^2\le Ch\sqrt{h}\|\grad\Bu^{h}\|^2.
\end{equation}
\end{enumerate}
\end{theorem}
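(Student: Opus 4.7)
The plan is to exhibit, for each case, an explicit one-parameter family $\Bu^{h} \in V_{h}^{1}\cap V_{h}^{2}$ realising the target ratio $\|(\grad\Bu^{h})_{\rm sym}\|^{2}/\|\grad\Bu^{h}\|^{2}$. Intersecting (\ref{3.2}) and (\ref{3.3}) forces all three components of $\Bu^{h}$ to vanish at $z=0$ and $z=L$, while $p$-periodicity in $\Gth$ is automatic for compactly supported test functions or integer-wavenumber Fourier modes in the circumferential direction.

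\textbf{Part (i): Kirchhoff plate ansatz on a flat patch.} In the open subset where $\kappa_{\Gth}=0$, equivalently $c(\Gth)=0$ by (\ref{As}), the simplified gradient (\ref{gradk0}) reduces to an orthonormal-frame gradient on a flat piece of surface. I would pick a smooth scalar $w(\Gth,z)$ compactly supported in a rectangle strictly inside this region and strictly inside $(0,L)$ in $z$, and set
\[
u_{t}^{h}=w,\qquad u_{\Gth}^{h}=-\frac{t\,w_{\Gth}}{A_{\Gth}},\qquad u_{z}^{h}=-\frac{t\,w_{z}}{A_{z}}.
\]
The support condition places $\Bu^{h}$ in $V_{h}^{1}\cap V_{h}^{2}$ automatically, and a direct substitution into (\ref{gradk0}) gives the classical Kirchhoff cancellation: the entries $e_{tt}$, $e_{t\Gth}$, $e_{tz}$ vanish identically, while the three membrane entries are linear in $t$ with coefficients formed from second derivatives of $w$. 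Hence $\|(\grad\Bu^{h})_{\rm sym}\|^{2}=O(h^{3})$. On the other hand $\|u_{\Gth,t}^{h}\|^{2}+\|u_{z,t}^{h}\|^{2}\ge C h$ from the tangent-tilt terms, yielding $\|\grad\Bu^{h}\|^{2}\ge Ch$ and therefore (\ref{Kirch}).

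\textbf{Part (ii): short-wavelength ansatz.} Here I would adapt to the present zero-Gaussian-curvature setting the $h^{3/2}$-ansatz built in \cite{grha14} for circular cylinders. We look for $\Bu^{h}$ of Kirchhoff form
\[
u_{t}^{h}=w,\qquad u_{\Gth}^{h}=v-\frac{t\,w_{\Gth}}{A_{\Gth}},\qquad u_{z}^{h}=g-\frac{t\,w_{z}}{A_{z}},
\]
with mid-surface scalars $(v,g,w)$ chosen so that the three membrane strains read off from (\ref{gradk0}) at $t=0$,
\[
\frac{v_{\Gth}+c\,w}{A_{\Gth}},\qquad \frac{g_{z}}{A_{z}},\qquad \hf\left(\frac{v_{z}}{A_{z}}+\frac{g_{\Gth}-a\,v}{A_{\Gth}}\right),
\]
are as small as possible while $w$ dominates and drives the bending. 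Using a high-frequency profile of wave-number $n$ in $\Gth$ (an integer multiple of $2\pi$ after reparameterisation by arc length) multiplied by $\psi(z)=\sin(\pi z/L)$, with relative amplitudes $w\sim n^{2}\psi$, $v\sim n\psi$, $g\sim\psi$, we annihilate the $O(n)$ contribution to each membrane strain. Balancing the residual membrane term against the bending contribution $O(h^{3/2} n^{2}\|w\|)$ selects $n\sim h^{-1/4}$ and produces the scaling (\ref{cyl}).

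\textbf{Main obstacle.} The technical core is in Part (ii). For a circular cylinder $A_{\Gth}\equiv1$, $a\equiv0$, $c=\Gk(\Gth)$ (Table~\ref{tab:cylcone}), and plain trigonometric modes in $\Gth$ cancel the leading membrane strain exactly as in \cite{grha14}. For a generic cross-section the $\Gth$-variation of $A_{\Gth}$ and $c$ demands that $\cos(n\Gth)$, $\sin(n\Gth)$ be replaced by $\cos(n\Phi(\Gth))$, $\sin(n\Phi(\Gth))$ with $\Phi'(\Gth)=1/A_{\Gth}(\Gth)$, so that $\Gth$-differentiation still produces a clean factor of $n/A_{\Gth}$, and that the $\Gth$-dependent amplitudes be tuned to solve approximately $v_{\Gth}+cw=0$ together with the companion shear relation. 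It is precisely at this point that the hypothesis $\kappa_{\Gth}>0$ is used: division by $c(\Gth)=A_{\Gth}\kappa_{\Gth}>0$ lets one recover $v$ from $w$ without blow-up. Once the ansatz is calibrated, verifying (\ref{cyl}) reduces to routine Fourier-mode estimates.
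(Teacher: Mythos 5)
Your Part~(i) is correct and essentially coincides with the paper's argument: the paper passes to local Cartesian coordinates on the flat patch and writes the classical Kirchhoff ansatz there, while you write the same ansatz directly in the $(t,\Gth,z)$ coordinates; since $c(\Gth)=0$ on that patch the true gradient~(\ref{gradk0}) and the simplified one coincide, and your computation that $e_{tt}=e_{t\Gth}=e_{tz}=0$ with the remaining entries linear in $t$ is right, so the ratio is $O(h^{2})$.

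Part~(ii) contains a genuine gap. With your ansatz $u_{\Gth}^{h}=v-tw_{,\Gth}/A_{\Gth}$ the transverse shear strain does \emph{not} cancel: using~(\ref{gradk0}),
\[
(\grad\Bu)_{t\Gth}\big|_{t=0}=\frac{w_{,\Gth}-c\,v}{A_{\Gth}},\qquad
(\grad\Bu)_{\Gth t}\big|_{t=0}=-\frac{w_{,\Gth}}{A_{\Gth}},\qquad
e_{t\Gth}\big|_{t=0}=-\frac{c\,v}{2A_{\Gth}}\neq 0.
\]
With your amplitude ordering $v\sim A/n$ this term is of size $A/n$, while the bending strains are of size $h\,n^{2}A$. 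No choice of $n$ makes both small enough: after integration over $I_{h}$, $\|e_{t\Gth}\|^{2}\gtrsim hA^{2}/n^{2}$, $\|\grad\Bu\|^{2}\sim hA^{2}n^{2}$, and the best achievable ratio from these two terms alone is $O(1/n^{4})$ balanced against the bending contribution $O(h^{2}n^{4})$ — this yields $h$ (for $n\sim h^{-1/4}$) or $h^{4/3}$ (for $n\sim h^{-1/3}$), but never the claimed $h^{3/2}$. The missing ingredient is the curvature correction to the Kirchhoff–Love hypothesis: for a surface with $\Gk_{\Gth}\neq 0$, zeroing $e_{t\Gth}$ at $t=0$ forces $u_{\Gth,t}\big|_{t=0}=\frac{c\,v-w_{,\Gth}}{A_{\Gth}}$, not $-\frac{w_{,\Gth}}{A_{\Gth}}$. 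This is precisely what the paper's system (\ref{ansyst}) builds in by demanding all components of $\BE_{0}^{h}$ except $(zz)$ vanish identically before introducing the $h^{-1/4}$ oscillation. Your strategy can be salvaged by replacing the naive Kirchhoff tilt in $u_{\Gth}$ with the curvature-corrected one, and then the rest of your scaling argument goes through.

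Two smaller issues. First, $\psi(z)=\sin(\pi z/L)$ does not put your ansatz into $V_{h}^{1}\cap V_{h}^{2}$: the term $-t\,w_{,z}/A_{z}$ in $u_{z}^{h}$ does not vanish at $z=0,L$ because $\psi'$ is nonzero there; you need $\psi$ compactly supported in $(0,L)$ (which is also how the paper proceeds). Second, the solvability of the two membrane annihilation relations ($e_{\Gth\Gth}=0$ and $e_{\Gth z}=0$ at $t=0$) for periodic $v,g$ is not routine; the paper has to split into two cases according to whether $A_{z}/A_{\Gth}$ separates as $H(\Gth)/G(z)$, and writes explicit solutions (\ref{gensol}), (\ref{vthz}) in each case. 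Your phrase ``tuned to solve approximately'' glosses over this compatibility question, which is where the hypothesis $\Gk_{\Gth}>0$ and the structure~(\ref{As}) of zero-Gaussian-curvature shells actually enter.
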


\begin{remark}
  Our results are formulated for shells cut along the coordinate
  surfaces. However, they are also valid for any shell $\CC_{h}$, bounded by
  the surfaces $z=Z_{\pm}(t,\Gth)$, where the spaces $V_{h}^{1}$, $V^{2}_{h}$
  are defined by (\ref{3.2}), (\ref{3.3}), respectively, except the indicated
  components of $\Bu$ vanish on the surfaces $z=Z_{\pm}(t,\Gth)$, instead of
  $z=L_{\pm}$.  This is because there exists shells
  $\CC_{h}^{-}\subset\CC_{h}\subset\CC_{h}^{+}$, such that the shells
  $\CC_{h}^{\pm}$ are bounded by surfaces $z=$constant. But then the ansatz
  from Theorem~\ref{th:3.3} supported in $\CC_{h}^{-}$ gives an upper bound on
  the Korn constant of $\CC_{h}$ that scales as $h^{3/2}$ (or as $h^{2}$). At
  the same time every function in $V_{h}^{1}$ or $V_{h}^{2}$ of $\CC_{h}$ can
  be extended (by extending the relevant components of $\Bu$ by zero) to a
  function in $V_{h}^{1}$ or $V_{h}^{2}$ of $\CC_{h}^{+}$, giving the lower
  bound on the Korn constant that scales as $h^{3/2}$ (or as $h^{2}$).
\end{remark}

\vspace{0.15cm}
\begin{remark}
Note, that the periodicity condition in the $\Gth-$direction in (\ref{3.2})-(\ref{3.3})
will be automatically satisfied for cylindrically-shaped shells. 
However, it is easy to check, that our main results hold (the proofs below go through) for different class or displacements $\Bu$
satisfying Robin boundary conditions on the thin faces of the shell. For instance, one can 
assume the condition 
\begin{equation}
\label{3.8}
\{\BGf\in W^{1,2}(\CC_{h};\mathbb R^3): \BGf_\Gth(t,0,z)=\BGf_\Gth(t,p,z)=0\}.
\end{equation}
on the thin edges of the shell in the $\Gth$ direction instead of the $p-$perioditicy of the $\Gth$ and the 
$z$ components in (\ref{3.2}) and (\ref{3.3}). Note, also, that the boundary condition (\ref{3.8}) is of 
Dirichlet type, but it can be rewritten as a Neumann condition $\BGF\cdot n=0$ on the thin edges of the 
shell in the $\Gth$ direction. 
\end{remark}

\section{Proofs of Theorems~\ref{th:3.1} and \ref{th:3.2}}
\setcounter{equation}{0}
\label{sec:4}
Our the strategy is to prove
first-and-a-half Korn inequality for the simplified version $\Grad\Bu$ of
$\grad\Bu$, given by (\ref{gradsimp})
\begin{equation}
\label{4.1}
\|\nabla\Bu\|^2\leq C\left(\frac{\|u_t\|\|e(\Bu)\|}{h}+\|e(\Bu)\|^2\right),
\end{equation}
where
\[
e(\Bu)=\hf\left(\Grad\Bu+(\Grad\Bu)^{T}\right).
\]
We then show that (\ref{4.1}) implies (\ref{3.9}). In order to prove
(\ref{4.1}) we apply the method, introduced in \cite{grha14}, of assembling
(\ref{4.1}) from the analogous two-dimensional inequalities corresponding to
the three coordinate surface cross-sections of the shell. Most of the proof is
done under the common assumptions of Theorems~\ref{th:3.1} and
\ref{th:3.2}. In other words, we assume that $\Bu\in V_{h}^{1}\cup V_{h}^{2}$
and we do not make any assumptions on the sign of $\Gk_{\Gth}$, until we say otherwise.

\subsection{The $t=\mathrm{const}$ cross-section}
The Korn-type inequality corresponding to $t=\mathrm{const}$ cross-section involves
$\Gth\Gth,$ $\Gth z$, $z\Gth$, and $zz$ components of the gradient. The
first-and-a-half Korn inequality in this case is stated in the following lemma.
\begin{lemma}
  \label{lem:tconst}
\begin{equation}
\label{4.2}
\|(\nabla\Bu)_{\Gth z}\|^2+\|(\nabla \Bu)_{z\Gth }\|^2\leq C\|e(\Bu)\|(\|e(\Bu)\|+\|u_t\|).
\end{equation}
\end{lemma}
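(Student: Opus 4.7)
The plan is to reduce inequality (\ref{4.2}) to a two-dimensional estimate on the skew part of the $(\Gth,z)$-block of the simplified gradient. Set
\[
\omega=\tfrac12\bigl((\nabla\Bu)_{\Gth z}-(\nabla\Bu)_{z\Gth}\bigr)=\tfrac12\left(\frac{u_{\Gth,z}}{A_z}-\frac{u_{z,\Gth}-a(\Gth)u_\Gth}{A_\Gth}\right).
\]
Then $(\nabla\Bu)_{\Gth z}=e_{\Gth z}(\Bu)+\omega$ and $(\nabla\Bu)_{z\Gth}=e_{\Gth z}(\Bu)-\omega$, so the LHS of (\ref{4.2}) equals $2\|e_{\Gth z}(\Bu)\|^2+2\|\omega\|^2\leq 2\|e(\Bu)\|^2+2\|\omega\|^2$, and it remains to establish
\[
\|\omega\|^2\leq C\|e(\Bu)\|\bigl(\|e(\Bu)\|+\|u_t\|\bigr).
\]

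Next I would extract formulas for $\omega_{,\Gth}$ and $\omega_{,z}$ from the equality of mixed partials $(u_{\Gth,\Gth})_{,z}=(u_{\Gth,z})_{,\Gth}$ and $(u_{z,\Gth})_{,z}=(u_{z,z})_{,\Gth}$, after substituting the identities
\[
u_{\Gth,z}=A_z(e_{\Gth z}+\omega),\ \ u_{z,\Gth}=A_\Gth(e_{\Gth z}-\omega)+au_\Gth,\ \ u_{\Gth,\Gth}=A_\Gth e_{\Gth\Gth}-cu_t-au_z,\ \ u_{z,z}=A_z e_{zz}.
\]
By (\ref{As}), $A_{z,\Gth}=0$ since $A_z=B'(z)$, and $A_{\Gth,z}=a(\Gth)A_z$. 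The resulting expressions for $\omega_{,\Gth}$ and $\omega_{,z}$ involve only derivatives of components of $e(\Bu)$, the single term $cu_{t,z}$, and lower-order multiples of $e$ and $\omega$; notably, no $u_\Gth$ or $u_z$ appears on the right-hand side.

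The core step is to compute $2\|\omega\|^2_{\CC_h}$ by writing $2\omega$ as above and integrating by parts in $z$ on the $u_{\Gth,z}$-term and in $\Gth$ on the $u_{z,\Gth}$-term. This generates $\omega_{,z}$ and $\omega_{,\Gth}$, which are replaced using the compatibility formulas just derived. A second round of IBP is then applied to every remaining contribution of the form $u_\Gth\cdot\partial_\bullet e$ or $u_z\cdot\partial_\bullet e$, transferring the derivative back onto $u_\Gth$ or $u_z$, where the identities above collapse it to $e$, $u_t$, or $\omega$. The boundary terms vanish: those at $z\in\{0,L\}$ by the Dirichlet conditions in $V_h^1\cup V_h^2$, and those at $\Gth\in\{0,p\}$ by $p$-periodicity (or by (\ref{3.8})).

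The main obstacle is the bookkeeping of cross-terms. Several pieces of the form $u_\Gth aA_z e_{\Gth z}$, $u_z aA_z e_{\Gth\Gth}$ and $u_z aA_z e_{zz}$ are produced, none of which can legitimately appear in the final bound. The key observation is that each such piece arises exactly twice with opposite signs---once from the ``direct'' substitution $A_{\Gth,z}=aA_z$ inside the compatibility formulas, and once from the second IBP applied to $u_\Gth A_\Gth e_{\Gth z,z}$, $u_z A_\Gth e_{\Gth\Gth,z}$, or $u_\Gth A_z e_{zz,\Gth}$---so that all $u_\Gth$ and $u_z$ contributions cancel exactly. This cancellation is precisely what the zero Gaussian curvature hypothesis makes possible through the parametrization (\ref{As}). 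What survives is
\[
\|\omega\|^2_{\CC_h}=\int_{\CC_h}\!\bigl[e_{\Gth z}^{2}-e_{\Gth\Gth}e_{zz}\bigr]A_zA_\Gth\,d\Gth\,dz\,dt+\int_{\CC_h}\!c\,u_t\,e_{zz}\,A_z\,d\Gth\,dz\,dt,
\]
and Cauchy--Schwarz on the two pieces delivers $\|\omega\|^2\leq C\|e(\Bu)\|(\|e(\Bu)\|+\|u_t\|)$, which, combined with the trivial bound on the symmetric contribution, proves (\ref{4.2}).
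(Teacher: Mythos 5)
Your reduction $\|(\nabla\Bu)_{\Gth z}\|^2+\|(\nabla\Bu)_{z\Gth}\|^2=2\|e_{\Gth z}\|^2+2\|\omega\|^2$ is the same algebraic starting point as the paper (which writes it as $4\|e_{\Gth z}\|^2-2((\nabla\Bu)_{\Gth z},(\nabla\Bu)_{z\Gth}))$), but from there you take a substantially more elaborate route: you derive compatibility formulas for $\omega_{,\Gth}$ and $\omega_{,z}$ from equality of mixed partials, write $2\|\omega\|^2=-\int A_\Gth\omega_{,z}u_\Gth+\int A_z\omega_{,\Gth}u_z$ by one round of IBP, substitute the compatibility formulas, and perform a second round of IBP. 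The paper instead observes that the cross term simplifies dramatically because the weight $A_zA_\Gth$ cancels: $((\nabla\Bu)_{\Gth z},(\nabla\Bu)_{z\Gth})=\int u_{\Gth,z}(u_{z,\Gth}-au_\Gth)$, whose $au_\Gth$ part is a total $z$-derivative of $u_\Gth^2$ and hence zero; two IBPs then convert $\int u_{\Gth,z}u_{z,\Gth}$ directly into $\int u_{\Gth,\Gth}u_{z,z}$, which is a product of symmetrized-gradient components plus lower-order terms. Your compatibility/St.~Venant approach is legitimate and, if carried out carefully, arrives at essentially the same identity, but it is considerably longer and requires more boundary-term bookkeeping than the paper's three-line computation.

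There is a genuine gap, however, in your claim that ``all $u_\Gth$ and $u_z$ contributions cancel exactly.'' This is only true on $V_h^2$ (and $V_h^1\cap V_h^2$). On $V_h^1$ the second-round IBP on $\int A_\Gth e_{\Gth\Gth,z}u_z\,dz$ produces a boundary term $\bigl[A_\Gth e_{\Gth\Gth}u_z\bigr]_{z=L}$ that does \emph{not} vanish, because in $V_h^1$ only $u_t$ and $u_\Gth$ vanish at $z=L$, not $u_z$. Using $u_\Gth(L)=u_t(L)=0$, one finds $e_{\Gth\Gth}(L)=au_z(L)/A_\Gth$, so the leftover boundary term equals $\int_{I_h}\int_0^p a\,u_z(L)^2\,d\Gth\,dt$; equivalently the paper's version of the identity contains an extra term $\int aA_zu_ze_{zz}$, which is what this boundary term is. Your final formula for $\|\omega\|^2$ therefore omits a contribution that must still be estimated. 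The fix is easy and is exactly what the paper does in (\ref{4.5}): since $u_z$ vanishes at $z=0$ in both $V_h^1$ and $V_h^2$, the Poincar\'e inequality gives $\|u_z\|\le C\|u_{z,z}\|\le C\|e(\Bu)\|$, so the extra term is bounded by $C\|e(\Bu)\|^2$ and the conclusion (\ref{4.2}) still holds. You should either include this Poincar\'e step explicitly, or restrict your ``exact cancellation'' claim to $V_h^2$ and treat $V_h^1$ separately.
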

\begin{proof}
Observing that
\[
\|(\nabla\Bu)_{\Gth z}\|^2+\|(\nabla \Bu)_{z\Gth }\|^2=
4\|e(\Bu)_{\Gth z}\|^{2}-2((\nabla\Bu)_{\Gth z},(\nabla \Bu)_{z\Gth })_{\CC_{h}},
\]
we conclude that it is sufficient to prove
\begin{equation}
\label{thz}
|((\nabla\Bu)_{\Gth z},(\nabla \Bu)_{z\Gth })_{\CC_{h}}|\leq C\|e(\Bu)\|(\|e(\Bu)\|+\|u_t\|).
\end{equation}
We have, that
\[
((\nabla\Bu)_{\Gth z},(\nabla \Bu)_{z\Gth })_{\CC_{h}}=\int_{I_{h}}\int_{l}^L
\int_{0}^{p}A_\Gth A_z\frac{u_{\Gth,z}}{A_z}\frac{u_{z,\Gth}-a(\Gth)u_\Gth}{A_\Gth}d\Gth dzdt
=\int_{I_{h}}(I_{1}(t)-I_{2}(t))dt,
\]
where
\[
I_1(t)=\int_{l}^L \int_{0}^{p}u_{\Gth,z}u_{z,\Gth}d\Gth dz,
\]
and
\[
I_2(t)=\int_{l}^{L}\int_{0}^p a(\Gth)u_\Gth u_{\Gth,z} d\Gth
dz=\hf\int_{0}^p a(\Gth)\int_{l}^{L}(u_{\Gth}^{2})_{,z}dz=0,
\]
since $u_{\Gth}=0$ at $z=l$ and $z=L$ in both $V^{1}_{h}$ and $V^{2}_{h}$.

Let us estimate $I_{1}(t)$. The idea is to observe that in both $V^{1}_{h}$ and $V^{2}_{h},$
\[
\int_{l}^L\int_{0}^{p}u_{\Gth,z}u_{z,\Gth}d\Gth dz=-
\int_{l}^L\int_{0}^{p}u_{\Gth}u_{z,\Gth z}d\Gth dz=
\int_{l}^L\int_{0}^{p}u_{\Gth,\Gth}u_{z,z}d\Gth dz,
\]
and then express $u_{\Gth,\Gth}$ and $u_{z,z}$ in terms of
$(\Grad\Bu)_{\Gth\Gth}=e(\Bu)_{\Gth\Gth}$ and $(\Grad\Bu)_{zz}=e(\Bu)_{zz}$,
respectively. Thus,
\begin{equation}
\label{4.3}
\int_{I_{h}}I_1(t)dt=\left(e(\Bu)_{zz},e(\Bu)_{\Gth\Gth}-\frac{a(\Gth)}{A_\Gth}u_z-\kappa_\Gth u_t\right).
\end{equation}
Applying the Schwartz inequality we obtain
\begin{align}
\label{4.4}
\left|\int_{I_{h}}I_1(t)dt\right|\leq C \|e(\Bu)\|(\|e(\Bu)\|+\|u_t\|+\|u_z\|).
\end{align}
By the Poincar\'e inequality
\begin{equation}
\label{4.5}
\|u_z\|\leq C\|u_{z,z}\|\leq C\|e(\Bu)\|.
\end{equation}
Using this inequality in (\ref{4.4}) we obtain the desired bound (\ref{4.2}).

\end{proof}

\subsection{The $\Gth=\mathrm{const}$ cross-section}
\begin{lemma}
\label{lem:Korntheta}
  \begin{equation}
\label{4.17}
\|(\nabla\Bu)_{tz}\|^2+\|(\nabla\Bu)_{zt}\|^2\leq C\left(\frac{\|u_t\|\cdot \|e(\Bu)\|}{h}+\|e(\Bu)\|^2\right).
\end{equation}
\end{lemma}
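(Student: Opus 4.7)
The strategy mirrors the proof of Lemma~\ref{lem:tconst}. Starting from the algebraic identity
\[
\|(\nabla\Bu)_{tz}\|^2+\|(\nabla\Bu)_{zt}\|^2 = 4\|e(\Bu)_{tz}\|^2 - 2I,\qquad I := \int_{\CC_h} A_{\Gth}\, u_{t,z}\, u_{z,t}\,dt\,d\Gth\,dz,
\]
the problem reduces to proving $|I|\le C(\|u_t\|\|e(\Bu)\|/h+\|e(\Bu)\|^2)$. The natural move is to transfer both derivatives by integrating by parts twice in the inner integrals. In $z$, $\int_l^L u_{t,z}u_{z,t}\,dz = -\int u_t (u_{z,t})_{,z}\,dz$, where the endpoint terms $[u_tu_{z,t}]_l^L$ vanish in both $V_h^1$ and $V_h^2$ (in $V_h^1$ because $u_t=0$ at $z=0,L$; in $V_h^2$ because $u_z=0$ at $z=l,L$ forces $u_{z,t}=0$ there). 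Then, swapping the order of the mixed derivative and integrating in $t$, $\int_{I_h} u_t u_{z,zt}\,dt = [u_tu_{z,z}]_{-h/2}^{h/2} - \int u_{t,t}u_{z,z}\,dt$. Combining these yields
\[
I = -J + K,\qquad J := \int_0^p\int_l^L A_{\Gth}\,[u_t u_{z,z}]_{-h/2}^{\,h/2}\,dz\,d\Gth,\qquad K := \int_{\CC_h} A_{\Gth}\, u_{t,t}\, u_{z,z}\,dt\,d\Gth\,dz.
\]
Unlike in Lemma~\ref{lem:tconst}, the $t$-integration leaves an irreducible trace term $J$ because no boundary condition is imposed on the thin faces $t=\pm h/2$ of the shell.

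The volume term $K$ is handled immediately: using $u_{t,t}=e(\Bu)_{tt}$ and $u_{z,z}/A_z=e(\Bu)_{zz}$ together with Cauchy--Schwarz gives $|K|\le C\|e(\Bu)\|^2$, which is of the desired form.

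\textbf{The main obstacle is the trace term $J$}, and in particular extracting the correct $h^{-1}$ scaling from it. Applying the standard one-dimensional trace inequality $|f(\pm h/2)|^2 \le Ch^{-1}\|f\|_{L^2(I_h)}^2 + Ch\|f'\|_{L^2(I_h)}^2$ to the $u_t$ factor cleanly produces the $h^{-1}\|u_t\|^2$ and $h\|e(\Bu)\|^2$ pieces, but applying it naively to the $u_{z,z}$ factor introduces $\|\partial_t u_{z,z}\|$, a second-order derivative of $\Bu$ that is not controlled by $\|e(\Bu)\|$. The plan is to first integrate $J$ by parts once more in $z$ (invoking again the vanishing of the $z$-boundary data, so that $[u_t u_z]_l^L=0$) to obtain
\[
J = -\int_0^p\int_l^L A_{\Gth}\,[u_{t,z}\,u_z]_{-h/2}^{\,h/2}\,dz\,d\Gth,
\]
then apply Cauchy--Schwarz on the trace and use the simplified-gradient identity $u_{t,z}/A_z = 2e(\Bu)_{tz}-u_{z,t}$ to replace the problematic trace of $u_{t,z}$ by traces of $e(\Bu)_{tz}$ and $u_{z,t}$. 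Poincar\'e's inequality in $z$ (which gives $\|u_z\|\le C\|u_{z,z}\|\le C\|e(\Bu)\|$ thanks to the BCs at $z=l,L$) controls one of the factors in the trace product, and Young's inequality will be used to absorb any residual $\|u_{z,t}\|^2$ contribution back into the left-hand side via $\|(\nabla\Bu)_{zt}\|^2 = \|u_{z,t}\|^2$. The delicate bookkeeping of constants needed to make the absorption step valid uniformly in $h$ is where the bulk of the technical work is expected to lie.
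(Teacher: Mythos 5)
Your strategy genuinely diverges from the paper's.  The paper does not argue by direct integration by parts at all: it observes that, freezing $\Gth$, the relevant $2\times 2$ block of $\Grad\Bu$ applied to $\BGf=(u_t,A_zu_z)$ is exactly the Cartesian gradient $\nabla^c\BGf$, so Lemma~\ref{lem:Korntheta} follows immediately from the two-dimensional ``first-and-a-half'' Korn inequality of \cite[Theorem~3.1]{grha14} (an inequality proved there by replacing $u$ with its harmonic extension and invoking the sharp estimate of Lemma~\ref{lem:4.1}), then integrating the resulting cross-sectional inequalities in $\Gth$.  You instead try to replicate the elementary double integration-by-parts used for the $t=\mathrm{const}$ cross-section (Lemma~\ref{lem:tconst}).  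That worked there because the periodic $\Gth$-direction has no thin edges; here the second integration by parts is in the \emph{thin} direction $t$, and a boundary trace on $t=\pm h/2$ is unavoidable.

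The gap is in controlling that trace term $J$.  After your extra $z$-integration by parts you must estimate $\int_0^p\int_l^L A_\Gth\,[u_{t,z}\,u_z]_{-h/2}^{h/2}\,dz\,d\Gth$.  The trace inequality applied to $u_z$ gives
\[
\|u_z(\pm h/2,\cdot,\cdot)\|_{L^2(d\Gth\,dz)}^2\le \frac{C}{h}\|u_z\|^2+Ch\|u_{z,t}\|^2,
\]
which is indeed absorbable.  But there is no analogous bound for the trace of $u_{t,z}$.  Writing $u_{t,z}=A_z(2e(\Bu)_{tz}-u_{z,t})$ does not help: the trace of $e(\Bu)_{tz}$ requires $\|\partial_t e(\Bu)_{tz}\|$, and the trace of $u_{z,t}$ requires $\|u_{z,tt}\|$, both second-order derivatives of $\Bu$ that $\|e(\Bu)\|$ does not control.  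Equivalently, if you try to avoid traces by writing $[u_{t,z}u_z]_{-h/2}^{h/2}=\int_{I_h}\partial_t(u_{t,z}u_z)\,dt$ and integrating by parts in $z$ again, you recover exactly the relation $J=K-I$, so the argument is circular.  The $h^{-1}$ factor in the target inequality cannot be produced by elementary integration by parts plus trace estimates alone; this is precisely the obstruction that the harmonic-extension machinery of \cite{grha14} (used in the paper's proof of Theorem~\ref{th:thetat}, which handles the analogous trace difficulty on the other thin cross-section) was designed to overcome.  As written, your proof does not close.
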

Let us show that Lemma~\ref{lem:Korntheta} is an immediate consequence of the
same two-dimensional inequality in Cartesian coordinates, proved in
\cite[Theorem~3.1]{grha14}. It states that if $h\in(0,1)$,
$\BGf=(u,v)\in H^{1}(I_{h}\times[L_{-},L_{+}];\bb{R}^{2})$ satisfies
$v(x,l)=0$, $x\in I_{h}$ in the sense of traces, then
 \begin{equation}
   \label{poltora}
   \|\nabla^{c}\BGf\|_{0}^{2}\le C\|e^{c}(\BGf)\|_{0}\left(\frac{\|u\|_{0}}{h}+\|e^{c}(\BGf)\|_{0}\right),
 \end{equation}
 where
\[
\nabla^{c}\BGf=\mat{u_{,x}}{u_{,y}}{v_{,x}}{v_{,y}},\qquad
e^{c}(\BGf)=\hf\left(\nabla^{c}\BGf+(\nabla^{c}\BGf)^{T}\right).
\]
Lemma~\ref{lem:Korntheta} says that the
same statement holds in our curvilinear coordinates, where $\Grad\Bu$ is
given by (\ref{gradsimp}).
\begin{proof}[Proof of Lemma~\ref{lem:Korntheta}]
We first prove
inequality (\ref{4.17}) for each fixed $\Gth\in[0,p]$:
\begin{equation}
  \label{K1.5theta}
  \|(\nabla\Bu)_{tz}\|^2_{\Gth}+\|(\nabla\Bu)_{zt}\|^2_{\Gth}\leq C\left(\frac{\|u_t\|_{\Gth}\cdot \|e_{\Gth}(\Bu)\|_{\Gth}}{h}+\|e_{\Gth}(\Bu)\|^2_{\Gth}\right),
\end{equation}
where
\[
\|f\|^{2}_{\Gth}=\int_{I_{h}}\int_{l}^{L}A_{z}f(t,\Gth,z)^{2}dzdt,\quad
e_{\Gth}(\Bu)=\mat{u_{t,t}}{\frac{1}{2}\left(\frac{u_{t,z}}{A_{z}}+u_{z,t}\right)}{\frac{1}{2}\left(\frac{u_{t,z}}{A_{z}}+u_{z,t}\right)}{\frac{u_{z,z}}{A_{z}}}.
\]
Then inequality (\ref{4.17}) is obtained from (\ref{K1.5theta}) by the
Cauchy-Schwartz inequality
\[
\int_{0}^{p}A_{\Gth}\|f\|_{\Gth}\|g\|_{\Gth}d\Gth\le\|f\|\|g\|,
\]
which becomes equality if $f=g$.
Let
\[
\BG=\mat{u_{t,t}}{u_{t,z}}{A_{z}u_{z,t}}{(A_{z}u_{z})_{,z}},\quad \BE=\hf(\BG+\BG^{T}).
\]
Then, by (\ref{poltora}), applied to $\BGf=(u_{t},A_{z}u_{z})$, we obtain
\begin{equation}
  \label{fromgrha14}
  \|\BG\|_{0}^{2}\le C\|\BE\|_{0}\left(\frac{\|u_{t}\|_{0}}{h}+\|\BE\|_{0}\right).
\end{equation}
By uniform positivity and boundedness of $A_{\Gth}$, $A_{z}$ norms
$\|\cdot\|_{0}$ and $\|\cdot\|_{\Gth}$ are equivalent. Hence,
\[
\|(\Grad\Bu)_{tz}\|_{\Gth}^{2}+\|(\Grad\Bu)_{zt}\|_{\Gth}^{2}\le
C(\|\BG_{tz}\|_{0}^{2}+(\|\BG_{zt}\|_{0}^{2})\le C\|\BG\|_{0}^{2}.
\]
Applying (\ref{fromgrha14}), we prove the lemma, if we show that
\begin{equation}
  \label{E0theta}
  \|\BE\|_{0}\le C\|e_{\Gth}(\Bu)\|_{\Gth}.
\end{equation}
We estimate
\[
\|\BE\|_{0}^{2}\le C(\|u_{t,t}\|_{\Gth}^{2}+\|u_{z,z}\|_{\Gth}^{2}+\|u_{z}\|_{\Gth}^{2})+
\hf\|u_{t,z}+A_{z}u_{z,t}\|_{0}^{2}.
\]
By the Poincar\'e inequality $\|u_{z}\|_{\Gth}^{2}\le \|u_{z,z}\|_{\Gth}^{2}$,
so that
\[
\|\BE\|_{0}^{2}\le
C\|e_{\Gth}(\Bu)\|_{\Gth}^{2}+\hf\|u_{t,z}+A_{z}u_{z,t}\|_{0}^{2}.
\]
It remains to observe that
\[
\|u_{t,z}+A_{z}u_{z,t}\|_{0}=\left\|\sqrt{A_{z}}\left(\frac{u_{t,z}}{A_{z}}+u_{z,t}\right)\right\|_{\Gth}\le C\|e_{\Gth}(\Bu)\|_{\Gth}.
\]
\end{proof}

\subsection{The  $z=\mathrm{const}$ cross-section}
\begin{lemma}
  \label{lem:zconst}
\begin{equation}
\label{4.25}
\|(\nabla\Bu)_{t\Gth}\|^2+\|(\nabla\Bu)_{\Gth t}\|^2 \leq
C\left(\frac{\|u_t\|\cdot\|e(\Bu)\|}{h}+\|e(\Bu)\|^2\right).
\end{equation}
\end{lemma}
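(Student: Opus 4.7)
The approach is to mimic the proof of Lemma~\ref{lem:tconst}. Via the identity
\[
\|(\nabla\Bu)_{t\Gth}\|^{2}+\|(\nabla\Bu)_{\Gth t}\|^{2}
= 4\|e(\Bu)_{t\Gth}\|^{2}-2I,\qquad
I:=\bigl((\nabla\Bu)_{t\Gth},(\nabla\Bu)_{\Gth t}\bigr)_{\CC_{h}},
\]
the estimate (\ref{4.25}) reduces to $|I|\leq C(\|u_t\|\,\|e(\Bu)\|/h+\|e(\Bu)\|^{2})$. Using (\ref{gradsimp}), we split $I=I_a-I_b$ with $I_a=\int_{\CC_h}A_z u_{t,\Gth}u_{\Gth,t}\,dt\,d\Gth\,dz$ and $I_b=\int_{\CC_h}A_z c(\Gth)u_{\Gth}u_{\Gth,t}\,dt\,d\Gth\,dz$. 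Integration by parts in $\Gth$ (periodic, with no boundary contribution) and then in $t$ produces
\[
I_a=\int_{\CC_{h}}A_z u_{t,t}u_{\Gth,\Gth}\,dV-\int_{0}^{L}\!\!\int_{0}^{p}A_z\bigl[u_t u_{\Gth,\Gth}\bigr]_{-h/2}^{h/2}\,d\Gth\,dz,
\]
while $I_b$ collapses via $2u_{\Gth}u_{\Gth,t}=(u_{\Gth}^{2})_{,t}$ into a pure boundary trace $\hf\int A_z c[u_{\Gth}^{2}]_{-h/2}^{h/2}\,d\Gth\,dz$. The interior term is estimated as in Lemma~\ref{lem:tconst}: substituting $u_{t,t}=e(\Bu)_{tt}$, $u_{\Gth,\Gth}=A_{\Gth}e(\Bu)_{\Gth\Gth}-cu_t-au_z$, and using the Poincar\'e bound $\|u_z\|\leq C\|e(\Bu)\|$ (valid since $u_z=0$ at $z=0$ in both $V_h^1$ and $V_h^2$) yields a contribution $\leq C\|e(\Bu)\|(\|e(\Bu)\|+\|u_t\|)$.

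The real work lies in the two boundary traces at $t=\pm h/2$. The key device is to decompose each component $u_j=\bar u_j(\Gth,z)+\tilde u_j(t,\Gth,z)$ into its $t$-average and fluctuation: since $[g]_{-h/2}^{h/2}=0$ whenever $g$ is $t$-independent, all pure-mean contributions vanish identically, leaving only fluctuation-squared and mean-fluctuation cross-terms. The Poincar\'e-Wirtinger estimate $\|\tilde u_j\|_{L^2(I_h)}\leq Ch\|u_{j,t}\|_{L^2(I_h)}$ combined with the trace inequality yields $\|\tilde u_j(\pm h/2)\|_{L^2(\Gth z)}^{2}\leq Ch\|u_{j,t}\|^{2}$, so the fluctuation-squared contributions are at most $Ch\|u_{j,t}\|^{2}$, absorbable into the left-hand side of (\ref{4.25}) for $h$ small. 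For the mean-fluctuation cross-terms, the means are controlled via 2D Poincar\'e in $z$ using the inherited boundary conditions: for instance, $\bar u_\Gth(\Gth,0)=\bar u_\Gth(\Gth,L)=0$ in both spaces gives $\|\bar u_\Gth\|_{L^2(\Gth z)}^{2}\leq C\|(\nabla\Bu)_{\Gth z}\|^{2}/h\leq C\|e(\Bu)\|(\|e(\Bu)\|+\|u_t\|)/h$ via Lemma~\ref{lem:tconst}. The $\sqrt h$ factor from each fluctuation trace precisely cancels the $1/\sqrt h$ in the corresponding mean, and Young's inequality converts each cross-term into an absorbable LHS piece plus $C(\|e(\Bu)\|^{2}+\|u_t\|\|e(\Bu)\|)$, within the target.

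The most delicate step is the boundary trace $[u_t u_{\Gth,\Gth}]_{-h/2}^{h/2}$, since a direct estimate would require controlling the second derivative $u_{\Gth,\Gth t}$, which is not available. To sidestep this I would integrate by parts in $\Gth$ (periodic) inside the boundary integral, turning $\int_{0}^{p}u_t u_{\Gth,\Gth}\,d\Gth$ into $-\int_{0}^{p}u_{t,\Gth}u_{\Gth}\,d\Gth$ and then substituting $u_{t,\Gth}=A_{\Gth}(\nabla\Bu)_{t\Gth}+cu_{\Gth}$, so the problematic trace reduces to $[(\nabla\Bu)_{t\Gth}u_{\Gth}]_{-h/2}^{h/2}$ plus a multiple of $[u_{\Gth}^{2}]_{-h/2}^{h/2}$, already treated. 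The remaining trace is handled by the same mean/fluctuation decomposition of $u_{\Gth}$ combined with $\|\overline{(\nabla\Bu)_{t\Gth}}\|_{L^2(\Gth z)}^{2}\leq\|(\nabla\Bu)_{t\Gth}\|^{2}/h$ and $\|\tilde u_{\Gth}(\pm h/2)\|\leq C\sqrt h\,\|u_{\Gth,t}\|$, giving a product $\leq C\|(\nabla\Bu)_{t\Gth}\|\cdot\|u_{\Gth,t}\|$ that must be absorbed by careful bookkeeping against the other contributions. I anticipate the main technical obstacle to be the $V_h^{2}$-case, where $u_t$ has no Dirichlet control at $z=0,L$: there the weaker mean bound $\|\bar u_t\|_{L^2(\Gth z)}\leq\|u_t\|/\sqrt h$ (from Cauchy-Schwartz in $t$) must be combined with Lemma~\ref{lem:Korntheta}'s estimate $\|(\nabla\Bu)_{zt}\|^{2}\leq C(\|u_t\|\|e(\Bu)\|/h+\|e(\Bu)\|^{2})$ to balance the $1/\sqrt h$ from the mean against the $\sqrt h$ from fluctuation traces and recover exactly the $\|u_t\|\,\|e(\Bu)\|/h$ scaling of (\ref{4.25}).
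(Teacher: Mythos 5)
Your plan is to mimic the paper's proof of Lemma~\ref{lem:tconst} by writing $\|(\nabla\Bu)_{t\Gth}\|^2+\|(\nabla\Bu)_{\Gth t}\|^2 = 4\|e(\Bu)_{t\Gth}\|^2 - 2I$ and controlling $I$ by integration by parts. This route breaks down precisely because, unlike the $t=\mathrm{const}$ cross-section, integration by parts in $t$ produces non-vanishing traces at $t=\pm h/2$, and the central one cannot be controlled. After you move the $\Gth$-derivative inside the trace and substitute $u_{t,\Gth}=A_\Gth(\nabla\Bu)_{t\Gth}+c\,u_\Gth$, you are left with $\int\!\!\int A_zA_\Gth\bigl[(\nabla\Bu)_{t\Gth}u_\Gth\bigr]_{-h/2}^{h/2}\,d\Gth\,dz$. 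Your mean/fluctuation decomposition only handles the term $\bar G\cdot\tilde u_\Gth$ (where $G=(\nabla\Bu)_{t\Gth}$). After the $\bar G\bar u_\Gth$ cancellation, the remaining pieces are $\bar u_\Gth\,[\tilde G]_{-h/2}^{h/2}$ and $[\tilde G\,\tilde u_\Gth]_{-h/2}^{h/2}$, and any estimate of $\tilde G(\pm h/2)$ -- whether via the Poincar\'e--trace route you use for $\tilde u_\Gth$ or via a generic trace inequality -- requires $\|G_{,t}\|=\|u_{t,\Gth t}\|$, a second derivative that is not controlled by $\nabla\Bu$ or $e(\Bu)$. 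You cannot integrate by parts in $\Gth$ again to remove it either: that just returns you to the trace $[u_t u_{\Gth,\Gth}]_{-h/2}^{h/2}$ you started from, as the two derivatives must fall on the same displacement component on the boundary no matter the order. Beyond this, even the piece you do estimate lands you at a term of size $C\|(\nabla\Bu)_{t\Gth}\|\,\|(\nabla\Bu)_{\Gth t}\|$ with a constant $C$ depending on $\sup A_z$, $\sup A_\Gth$, which need not be small enough to absorb into the left-hand side of the identity $\|G_{t\Gth}\|^2+\|G_{\Gth t}\|^2=4\|e_{t\Gth}\|^2-2I$.

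The paper's proof avoids all trace issues by a different mechanism: it first strips the $u_z$ term from $e(\Bu)_{\Gth\Gth}$ using Poincar\'e in $z$, then invokes the two-dimensional first-and-a-half Korn inequality Lemma~\ref{lem:thetat} on each $z=\mathrm{const}$ slice, integrates in $z$, and finally controls the resulting $\|u_\Gth\|^2$ and $\|u_t\|^2$ error terms (the latter separately for $V_h^1$ via Poincar\'e and for $V_h^2$ via the algebraic identity (\ref{uethth}) plus an integration by parts in $\Gth$). The essential ingredient is that Lemma~\ref{lem:thetat} itself is proved in Section~\ref{sec:Kt2D} by replacing $u$ with its \emph{harmonic extension} $w$: multiplying $\triangle(u-w)$ by $u-w$ and integrating by parts costs no boundary terms because $u-w$ vanishes on $\partial R_h$, and the $\|w_{,y}\|$ estimate comes from the sharp harmonic-function inequality Lemma~\ref{lem:4.1}, which is first-order. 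It is precisely this harmonic extension device, absent in your argument, that makes the $z=\mathrm{const}$ (and $\Gth=\mathrm{const}$) cross-sections tractable, whereas the naive parts-integration route only works when the relevant displacement components have Dirichlet data at the in-plane boundary, as happens in Lemma~\ref{lem:tconst}.
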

\begin{proof}
  As before, we will show that that (\ref{4.25}) is a consequence of a
two-dimensional Korn-type inequality. However, before we can proceed with this
strategy, we observe that the term with $u_{z}$ in the $\Gth\Gth$-component of
$\Grad\Bu$ can be easily discarded due to the Poincar\'e inequality
(\ref{4.5}). Indeed, suppose we have proved (\ref{4.25}), where $e(\Bu)$ is
replaced with
\[
e'(\Bu)=\mat{e(\Bu)_{tt}}{e(\Bu)_{t\Gth}}{e(\Bu)_{t\Gth}}{e'(\Bu)_{\Gth\Gth}},
\]
where
\[
e'(\Bu)_{\Gth\Gth}=\dfrac{u_{\Gth,\Gth}+c(\Gth)u_{t}}{A_{\Gth}}.
\]
Then
\[
\|e'(\Bu)\|\le\|e(\Bu)\|+C\|u_{z}\|\le C\|e(\Bu)\|,
\]
so that
\[
\|(\nabla\Bu)_{t\Gth}\|^2+\|(\nabla\Bu)_{\Gth t}\|^2 \leq
C\left(\frac{\|u_t\|\cdot\|e'(\Bu)\|}{h}+\|e'(\Bu)\|^2\right)\le
C\left(\frac{\|u_t\|\cdot\|e(\Bu)\|}{h}+\|e(\Bu)\|^2\right).
\]
Next we prove the two-dimensional Korn-type inequality
\begin{lemma}
\label{lem:thetat}
  Let
\[
V=\left\{\BGf=(u,v)\in H^{1}_{\rm loc}\left(\left[-\frac{h}{2},\frac{h}{2}\right]\times\bb{R}\right):
u(x,y)\text{ is }p-\text{periodic}\right\}
\]
Let
\[
\BG=\mat{u_{,x}}{\Ga(y)u_{,y}-\Gk(y)v}{v_{,x}}{\Ga(y)v_{,y}+\Gk(y)u},\qquad
\BE=\hf(\BG+\BG^{T}).
\]
Then,
\begin{equation}
  \label{thetat}
  \|\BG\|_{0}^{2}\le C\left(\frac{\|u\|_{0}\|\BE\|_{0}}{h}+\|\BE\|_{0}^{2}+\|\BGf\|_{0}^{2}\right).
\end{equation}
\end{lemma}
This lemma is proved in Section~\ref{sec:Kt2D}.
If we apply this lemma to $\BGf(t,\Gth)=(u_{t}(t,\Gth,z),u_{\Gth}(t,\Gth,z))$
(for each fixed value of $z$) and then integrate over $z\in[L_{-},L_{+}]$, we obtain
the inequality (taking into account the equivalence between the curvilinear
norm (\ref{normf}) and the Euclidean norm)
\begin{equation}
  \label{thetat3d}
  \|(\nabla\Bu)_{t\Gth}\|^2+\|(\nabla\Bu)_{\Gth t}\|^2 \leq
C\left(\frac{\|u_t\|\cdot\|e(\Bu)\|}{h}+\|e(\Bu)\|^2+\|u_{t}\|^{2}+\|u_{\Gth}\|^{2}\right).
\end{equation}
By the Poincar\'e inequality and (\ref{4.2}) we obtain
\begin{equation}
  \label{utheta}
  \|u_{\Gth}\|^{2}\le C\|(\Grad\Bu)_{\Gth z}\|^{2}\le C\|e(\Bu)\|(\|e(\Bu)\|+\|u_t\|).
\end{equation}
At this point the different assumptions in Theorems~\ref{th:3.1} and
\ref{th:3.2} become important.

If $\Bu\in V^{1}_{h}$ then we can just use the Poincar\'e inequality and (\ref{4.17})
\begin{equation}
  \label{utbd1}
  \|u_{t}\|^{2}\le C\|(\Grad\Bu)_{tz}\|^{2}\le C\left(\frac{\|u_t\|\cdot\|e(\Bu)\|}{h}+\|e(\Bu)\|^2\right).
\end{equation}
Under the assumptions of Theorem~\ref{th:3.2} the Poincar\'e inequality cannot
be used. Instead we estimate expressing $u_{t}$ in terms of $e'(\Bu)_{\Gth\Gth}$:
\begin{equation}
  \label{uethth}
  u_{t}=\frac{A_{\Gth}e'(\Bu)_{\Gth\Gth}-u_{\Gth,\Gth}}{A_{\Gth}\Gk_{\Gth}}.
\end{equation}
Multiplying both sides of (\ref{uethth}) by $u_{t}$ and integrating by parts
(without writing integrals), we obtain
\[
u_{t}^{2}=\frac{u_{t}e'(\Bu)_{\Gth\Gth}}{\Gk_{\Gth}}-
\dif{}{\Gth}\left(\frac{u_{\Gth}u_{t}}{A_{\Gth}\Gk_{\Gth}}\right)+
u_{\Gth}u_{t}\dif{}{\Gth}\left(\frac{1}{A_{\Gth}\Gk_{\Gth}}\right)+
\frac{u_{\Gth}u_{t,\Gth}}{A_{\Gth}\Gk_{\Gth}}.
\]
Finally, replacing $u_{t,\Gth}$ in the formula above by
\[
u_{t,\Gth}=A_{\Gth}(\Grad\Bu)_{t\Gth}+c(\Gth)u_{\Gth}
\]
and using periodicity in $\Gth$ we obtain the estimate, taking into account
the equivalence of $\|\cdot\|_{0}$ and $\|\cdot\|$ norms
\[
\|u_{t}\|^{2}\le
C(\|u_{t}\|\|e(\Bu)\|+\|u_{t}\|\|u_{\Gth}\|+\|u_{\Gth}\|^{2}+\|(\Grad\Bu)_{t\Gth}\|\|u_{\Gth}\|).
\]
Hence, using the inequality $Cab\le a^{2}/2+C^{2}b^{2}/2$ (several times) and
replacing $\|u_{\Gth}\|^{2}$ by its estimate from (\ref{utheta}), we obtain
the bound.
\begin{equation}
  \label{utbd2}
  \|u_{t}\|^{2}\le C(\|e(\Bu)\|^{2}+\|(\Grad\Bu)_{t\Gth}\|\|u_{\Gth}\|).
\end{equation}
Using this inequality to estimate the term $\|u_{t}\|^{2}$ in (\ref{thetat3d})
we obtain the desired inequality (\ref{4.25}).

\end{proof}

\subsection{Conclusion of the proof}
Combining the estimates (\ref{4.2}), (\ref{4.17}) and (\ref{4.25}) we arrive
at (\ref{4.1}). However, $\Grad\Bu$ and $e(\Bu)$ are the simplified versions
of $\grad\Bu$ and $(\grad\Bu)_{\rm sym}$. Thus, we need to show that (\ref{3.9}) follows from
(\ref{4.1}). Under the assumptions of
Theorems~\ref{th:3.1} it is a consequence of
(\ref{utbd1}), while under the assumptions of
Theorems~\ref{th:3.2} it is a consequence of
(\ref{utbd2}).

The main observation in either case is that components of $\grad\Bu$ and
$\Grad\Bu$ are multiples of one another with coefficients that are independent
of $\Bu$. Thus, by direct calculation, we estimate
\begin{equation}
\label{4.33}
\|\grad\Bu-\nabla\Bu\|\leq Ch\|\grad\Bu\|,\quad\text{for all}\quad \Bu\in H^1(\CC_h),
\end{equation}
from which we get additionally,
\begin{equation}
\label{4.34}
\|(\grad\Bu)_{\rm sym}-e(\Bu)\|\leq \|\grad\Bu-\nabla\Bu\|\leq Ch\|\grad\Bu\|,\quad\text{for all}\quad \Bu\in H^1(\CC_h).
\end{equation}

\begin{proof}[Proof of Theorem~\ref{th:3.1}]
Observe, that $tt,$ $tz$, $zt$ and $zz$
components of $\grad\Bu$ and $\Grad\Bu$ coincide. The analysis for the cross
section $\Gth=\mathrm{const}$ involved only these components of the
gradient. Thus, estimate (\ref{4.17}) holds for $\grad\Bu$  in place of
$\Grad\Bu$. This implies (\ref{utbd1}) for $(\grad\Bu)_{\rm sym}$ in place of
$e(\Bu),$ i.e., we have
\begin{equation}
\label{4.35}
\|u_t\|\leq\frac{\|(\grad\Bu)_{\rm sym}\|}{h}.
\end{equation}
This allows us to show that (\ref{4.1}) implies (\ref{3.9}). Combining
 (\ref{4.1}), (\ref{4.33}) and (\ref{4.34}), we obtain
 \begin{equation}
   \label{almostKorn}
   \|\grad\Bu\|^2\leq C\left(\frac{\|u_t\|\|(\grad\Bu)_{\rm sym}\|}{h}+\|u_t\|\|\grad\Bu\|+\|(\grad\Bu)_{\rm sym}\|^2\right).
 \end{equation}
Estimating
\[
C\|u_t\|\|\grad\Bu\|\le\frac{1}{2}\|\grad\Bu\|^2+\frac{C^2}{2}\|u_t\|^2
\]
we get
\[
\|\grad\Bu\|^2\leq C\left(\frac{\|u_t\|\|(\grad\Bu)_{\rm sym}\|}{h}+\|u_t\|^{2}+\|(\grad\Bu)_{\rm sym}\|^2\right).
\]
Finally, by (\ref{4.35})
\[
\|u_t\|^{2}\le\frac{\|u_t\|\|(\grad\Bu)_{\rm sym}\|}{h},
\]
and (\ref{3.9}) follows.
Combining estimates (\ref{4.35}) and (\ref{3.9}) we obtain (\ref{3.11}).
\end{proof}

\begin{proof}[Proof of Theorem~\ref{th:3.2}]
In this case we proceed in the same way as in
\cite{grha14}, proving the following lemma.
\begin{lemma}
  \label{lem:K2}
Inequalities (\ref{4.1}), (\ref{utheta}), and (\ref{utbd2}) taken
together, imply
\begin{equation}
  \label{simpKorn}
  \|\Grad\Bu\|^{2}\le\frac{C}{h\sqrt{h}}\|e(\Bu)\|^{2}.
\end{equation}
\end{lemma}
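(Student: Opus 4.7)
The plan is to use the three ingredients to bootstrap a one-sided bound $\|u_t\|\leq C\|e(\Bu)\|/\sqrt{h}$ and then feed it back into (\ref{4.1}). For brevity, write $E = \|e(\Bu)\|$, $U = \|u_t\|$, $V = \|u_\Gth\|$, $G = \|\Grad\Bu\|$. The system is cyclic: (\ref{4.1}) controls $G$ by $U$ and $E$; (\ref{utheta}) controls $V$ by $U$ and $E$; (\ref{utbd2}) controls $U$ by $E$, $G$, and $V$. Eliminating $G$ and $V$ in favor of $U$ and $E$ produces a single scalar inequality for $U$, which then closes via one Young-inequality step.

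Concretely, taking square roots in (\ref{4.1}) yields $G \leq C(\sqrt{UE/h} + E)$, and (\ref{utheta}) combined with $\sqrt{a+b}\leq \sqrt{a}+\sqrt{b}$ yields $V \leq C(E + \sqrt{EU})$. Substituting both into (\ref{utbd2}) (using the trivial $\|(\Grad\Bu)_{t\Gth}\|\leq G$), expanding the product, and absorbing $h$-independent terms into the $h^{-1/2}$ ones by $h\in(0,1)$, I obtain
\begin{equation*}
U^{2}\;\leq\;CE^{2}\,+\,\frac{CE^{3/2}U^{1/2}}{\sqrt{h}}\,+\,\frac{CEU}{\sqrt{h}}.
\end{equation*}
The last term on the right would be handled by the standard split $CEU/\sqrt{h}\leq \tfrac14 U^2 + CE^2/h$. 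The middle term, because of the $U^{1/2}$ factor, instead requires the asymmetric $4$--$4/3$ Young inequality $ab\leq \tfrac14 a^{4}+Cb^{4/3}$ with $a = U^{1/2}$ and $b = CE^{3/2}/\sqrt{h}$, giving $\tfrac14 U^2 + CE^{2}/h^{2/3}$. Absorbing both $\tfrac14 U^2$ contributions into the left-hand side and using $h\in(0,1)$ to simplify the surviving $E^2$ terms produces $U^2 \leq CE^2/h$, i.e.\ $U \leq CE/\sqrt{h}$.

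Substituting this bound back into (\ref{4.1}) gives
\begin{equation*}
G^{2}\;\leq\;\frac{CUE}{h}+CE^{2}\;\leq\;\frac{CE^{2}}{h^{3/2}}+CE^{2}\;\leq\;\frac{CE^{2}}{h^{3/2}},
\end{equation*}
which is exactly (\ref{simpKorn}). The delicate step is the asymmetric Young bookkeeping: the $U^{1/2}$ factor that appears once $G$ and $V$ are inserted into (\ref{utbd2}) produces an error of size $E^2/h^{2/3}$ rather than $E^2/h$, and it is precisely this that keeps the overall $h$-exponent in the bound on $U^2$ at $h^{-1}$ and thereby pins down the $h^{-3/2}$ (rather than $h^{-2}$) scaling of (\ref{simpKorn}). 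A careless symmetric $2$--$2$ split would lose this and yield a strictly worse exponent.
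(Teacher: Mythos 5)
Your proof is correct, and it takes a genuinely different route from the paper's. The paper works in the opposite direction: from (\ref{utbd2}) and (\ref{utheta}) it derives the parameterized estimate $\|u_t\|\le C(\|e(\Bu)\|/\Ge^{2}+\Ge\|\Grad\Bu\|)$ valid for every $\Ge\in(0,1)$, substitutes it into (\ref{4.1}), absorbs the $\|\Grad\Bu\|$ term by a symmetric Young step to get $\|\Grad\Bu\|^{2}\le C(h^{-1}\Ge^{-2}+\Ge^{2}h^{-2})\|e(\Bu)\|^{2}$, and then optimizes by the explicit choice $\Ge=h^{1/4}$. You instead eliminate $\|\Grad\Bu\|$ and $\|u_{\Gth}\|$ from (\ref{utbd2}) at the outset, using the bounds that (\ref{4.1}) and (\ref{utheta}) give in terms of $\|u_{t}\|$ and $\|e(\Bu)\|$, arriving at a closed scalar inequality for $\|u_{t}\|$ alone, and you let the $4$--$4/3$ Young split do the exponent bookkeeping automatically, with no free parameter to tune. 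The two proofs are equivalent in outcome; yours produces the clean intermediate bound $\|u_{t}\|\le C\|e(\Bu)\|/\sqrt{h}$ as a by-product, while the paper's $\Ge$-parameterization is the more flexible template (it is the same scheme used in \cite{grha14}) and degrades gracefully if the ingredient inequalities change their $h$-exponents, since one simply re-optimizes over $\Ge$. One small point worth making explicit in your write-up: the constant $C$ multiplying the cross terms must be absorbed, which is why the $1/4$ weights in your Young splits should be read as the usual weighted Young $ab\le\Gd a^{p}/p+C_{\Gd}b^{q}/q$ with $\Gd$ chosen so that the two $\tfrac14\|u_{t}\|^{2}$ contributions together can be moved to the left.
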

We postpone the proof of this virtually algebraic lemma to finish the proof
of (\ref{3.9}).

Combining (\ref{simpKorn}) with (\ref{4.34}) we obtain
\[
\|\grad\Bu\|^{2}\le C\|\Grad\Bu\|^{2}\le
\frac{C}{h\sqrt{h}}(\|(\grad\Bu)_{\rm sym}\|^{2}+h^{2}\|\grad\Bu\|^{2}),
\]
proving the first Korn inequality (\ref{3.10}). Now, inequality (\ref{4.1}) and (\ref{3.10})
imply (\ref{3.9}). Indeed,
Using the estimate
\[
\|u_t\|\|\grad\Bu\|\le\frac{C\|u_t\|\|(\grad\Bu)_{\rm sym}\|}{h^{3/4}}\le\frac{C\|u_t\|\|(\grad\Bu)_{\rm sym}\|}{h}
\]
in (\ref{almostKorn}) we obtain (\ref{3.9}).

\end{proof}

\begin{proof}[Proof of Lemma~\ref{lem:K2}.] We begin with the inequality (\ref{utbd2}),
\[
\|u_{t}\|^{2}\le C(\|e(\Bu)\|^{2}+2\|\Grad\Bu\|\|u_{\Gth}\|)\le
C\left(\|e(\Bu)\|^{2}+\Ge^{2}\|\Grad\Bu\|^{2}+\nth{\Ge^{2}}\|u_{\Gth}\|^{2}\right)
\]
for any $\Ge>0$.
The small parameter $\Ge\in(0,1)$ will be chosen later to optimize the resulting
inequality. Estimating $\|u_{\Gth}\|^{2}$ by (\ref{utheta}) we obtain for
sufficiently small $\Ge,$
\[
\|u_{t}\|^{2}\le
C\left(\|e(\Bu)\|^{2}+\Ge^{2}\|\Grad\Bu\|^{2}+\nth{\Ge^{2}}(\|e(\Bu)\|^{2}+\|e(\Bu)\|\|u_{t}\|)\right).
\]
Estimating
\[
\frac{C}{\Ge^{2}}\|e(\Bu)\|\|u_{t}\|\le\hf\|u_{t}\|^{2}+\frac{C^{2}}{2\Ge^{4}}\|e(\Bu)\|^{2},
\]
we obtain
\[
\|u_{t}\|^{2}\le C\left(\frac{\|e(\Bu)\|^{2}}{\Ge^{4}}+\Ge^{2}\|\Grad\Bu\|^{2}\right).
\]
Thus,
\begin{equation}
\label{bound.on.u_r}
\|u_{t}\|\le C\left(\frac{\|e(\Bu)\|}{\Ge^{2}}+\Ge\|\Grad\Bu\|\right).
\end{equation}
Substituting this inequality into (\ref{4.1}), we obtain
\[
\|\Grad\Bu\|^{2}\le C\left(\frac{\|e(\Bu)\|^{2}}{h\Ge^{2}}+\frac{\Ge\|\Grad\Bu\|\|e(\Bu)\|}{h}\right).
\]
Estimating
\[
\frac{C\Ge\|\Grad\Bu\|\|e(\Bu)\|}{h}\le\hf\|\Grad\Bu\|^{2}+\frac{C^{2}\Ge^{2}\|e(\Bu)\|^{2}}{h^{2}},
\]
we obtain the inequality
\[
\|\Grad\Bu\|^{2}\le C\left(\frac{1}{h\Ge^{2}}+\frac{\Ge^{2}}{h^{2}}\right)\|e(\Bu)\|^{2}.
\]
We now choose $\Ge=h^{1/4}$ to minimize the upper bound and obtain (\ref{simpKorn}).
\end{proof}

\section{The two-dimensional Korn-type inequality}
\setcounter{equation}{0}
\label{sec:Kt2D}
In this section we prove the following two-dimensional Korn-type inequality.
\begin{theorem}
\label{th:thetat}
  Let
\[
V=\left\{\BGf=(u,v)\in H^{1}_{\rm loc}\left(\left[-\frac{h}{2},\frac{h}{2}\right]\times[0,p]\right): u(x,y)\text{ is }p-\text{periodic}\right\}
\]
and let
\begin{equation}
  \label{GEdef}
  \BG(\BGf)=\mat{u_{,x}}{a_{1}(y)u_{,y}+b_{1}(y)v}{v_{,x}}{a_{2}(y)v_{,y}+b_{2}(y)u},\qquad
\BE(\BGf)=\hf(\BG+\BG^{T}),
\end{equation}
where $a_{1}$, $a_{2}$, $b_{1}$, $b_{2}$ are Lipschitz continuous
functions, such that $a_{1}(y)$ and $a_{2}(y)$ do not vanish
on $[0,p]$.
Then,
\begin{equation}
  \label{thetat2}
  \|\BG\|^{2}\le C\left(\frac{\|u\|\|\BE\|}{h}+\|\BE\|^{2}+\|\BGf\|^{2}\right),
\end{equation}
where
\[
\|f\|^{2}=\int_{-h/2}^{h/2}\int_{0}^{p}f(x,y)^{2}dydx.
\]
\end{theorem}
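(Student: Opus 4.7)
The plan is to follow the strategy of the Cartesian analog (\cite[Theorem~3.1]{grha14}), suitably modified to accommodate the non-constant coefficients $a_{i}(y),b_{i}(y)$ and the $p$-periodicity in $y$ (which replaces the Dirichlet trace used there). Since the diagonal components satisfy $\BG_{11}=\BE_{11}$ and $\BG_{22}=\BE_{22}$, and since $\|\BG_{12}+\BG_{21}\|^{2}=4\|\BE_{12}\|^{2}$, bounding $\|\BG\|^{2}$ reduces to estimating the cross term
\[
I=(\BG_{12},\BG_{21})=\int(a_{1}u_{,y}+b_{1}v)v_{,x}\,dx\,dy.
\]
I would split $I=A+B$ with $A=\int a_{1}u_{,y}v_{,x}$ and $B=\int b_{1}vv_{,x}$ and treat each separately.

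For $B$, integration by parts in $x$ produces the boundary integral $\hf\int_{0}^{p}b_{1}(y)[v^{2}]_{x=-h/2}^{h/2}\,dy$, which I would handle by writing $v=\bar{v}(y)+v'(x,y)$ with $\bar{v}$ the $x$-average. The one-dimensional Poincar\'e estimate $\|v'\|_{L^{2}_{x}}^{2}\le Ch^{2}\|v_{,x}\|_{L^{2}_{x}}^{2}$, combined with the trace inequality $\|\phi(\pm h/2,\cdot)\|_{L^{2}_{y}}^{2}\le (2/h)\|\phi\|^{2}+h\|\phi_{,x}\|^{2}$ applied to $v'$, then yields $|B|\le C(\|\BGf\|\|\BG\|+h\|\BG\|^{2})$, which is absorbed on the left by Young's inequality and smallness of $h$.

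For $A$, I would first integrate by parts in $y$ using $p$-periodicity to obtain $A=-\int a_{1}'uv_{,x}-\int a_{1}uv_{,xy}$; the first summand is bounded by $C\|\BGf\|\|\BG\|$. A subsequent integration by parts in $x$ converts the remaining piece into the bulk term $\int a_{1}u_{,x}v_{,y}$, which, after substituting $v_{,y}=(\BE_{22}-b_{2}u)/a_{2}$ from the definition of $\BE_{22}$, is controlled by $C(\|\BE\|^{2}+\|\BGf\|^{2})$, plus a boundary contribution $T'=\int_{0}^{p}a_{1}[uv_{,y}]_{x=-h/2}^{h/2}\,dy$. I would estimate $T'$ by substituting $a_{1}u_{,y}=2\BE_{12}-b_{1}v-v_{,x}$ and rewriting $T'$ (via the identity $uv_{,y}=\partial_{y}(uv)-u_{,y}v$ and $p$-periodicity in $y$) as a sum of boundary integrals of $\BE_{12}v$, $b_{1}v^{2}$ and $vv_{,x}$ at $x=\pm h/2$. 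Each of these is then converted back into a bulk integral by the fundamental theorem of calculus $[f]_{-h/2}^{h/2}=\int_{-h/2}^{h/2}f_{,x}\,dx$, so that no second-order derivatives appear, and Cauchy--Schwarz together with the decomposition $v=\bar{v}+v'$ yields the critical contribution $\|u\|\|\BE\|/h$ from the interplay between the $1/h$ trace factor and the $L^{2}$ mass $\|u\|$.

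The main obstacle is the boundary integral $T'$ at $x=\pm h/2$: a direct application of the trace inequality to $v_{,y}(\pm h/2,\cdot)$ would require controlling second-order derivatives such as $v_{,xy}$, which are not bounded by $\BE$. The resolution is the algebraic substitution $a_{1}u_{,y}=2\BE_{12}-b_{1}v-v_{,x}$, which reduces $T'$ to boundary integrals that are immediately expressible as first-order bulk integrals. Careful bookkeeping is then needed to ensure that the $1/h$ factor from the trace estimates appears only in combination with $\|u\|\|\BE\|$ and not in disallowed combinations such as $\|\BGf\|^{2}/h$, after which all remaining terms either fit the right-hand side of (\ref{thetat2}) or are absorbed into $\|\BG\|^{2}$ for sufficiently small $h$.
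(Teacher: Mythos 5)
Your proposal bypasses the key ingredient of the paper's proof, and the resulting gap is fatal. The paper does not attack $\|\BG\|^2$ by integration by parts on the cross term; instead it replaces $u$ by its harmonic extension $w$ solving the Dirichlet problem (\ref{4.26}) and then invokes the sharp inequality for harmonic functions on a thin rectangle, $\|w_{,y}\|^{2}\le\frac{2\sqrt{3}}{h}\|w\|\|w_{,x}\|+\|w_{,x}\|^{2}$ (Lemma~\ref{lem:4.1}, from \cite{grha14}). The identity $\triangle(u-w)=\triangle u$, written out in terms of $\BE$ in (\ref{trianglewins}), is multiplied by $u-w$ and integrated; every boundary term drops because $u-w\equiv 0$ on $\partial R_h$. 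That is what produces the critical $\|u\|\|\BE\|/h$ term and, at the same time, eliminates the boundary integrals that your scheme has to fight with.

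In your scheme the boundary term $T'=\int_0^p a_1[uv_{,y}]_{x=-h/2}^{h/2}\,dy$ does not close. First, the claim that FTC conversion produces ``no second-order derivatives'' is false: $[\BE_{12}v]_{-h/2}^{h/2}=\int(\BE_{12,x}v+\BE_{12}v_{,x})\,dx$ and $[vv_{,x}]_{-h/2}^{h/2}=\int(v_{,x}^{2}+vv_{,xx})\,dx$. The second-order pieces partially cancel to $-a_1u_{,xy}v$, which then needs a further integration by parts in $y$ and hence $p$-periodicity of $v$ (and of $a_1$)—neither of which is assumed in the theorem (only $u$ is periodic). Second, and decisively, the surviving bulk piece $\int v_{,x}^{2}\,dx\,dy=\|\BG_{21}\|^{2}$ is a full component of $\|\BG\|^{2}$ and cannot be absorbed into the left side. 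This is not a bookkeeping nuisance but an essential feature: for the oscillatory Kirchhoff ansatz $u=\phi(y/\epsilon)$, $v=-(x/\epsilon)\phi'(y/\epsilon)$ with $\epsilon\sim\sqrt h$, one finds $T'\sim h/\epsilon^2\sim 1$, which is comparable to $\|\BG\|^{2}$ and dwarfs $\|\BE\|^{2}+\|\BGf\|^{2}\sim h$, while $\|u\|\|\BE\|/h\sim 1$ is exactly the right size. So the $\|u\|\|\BE\|/h$ term is genuinely of leading order, and no combination of Cauchy--Schwarz, trace estimates, and the $v=\bar v+v'$ split will conjure it up: a generic trace pairing gives $\|u\|\|v\|/h$ or $\|u\|\|\BG\|/h$, not $\|u\|\|\BE\|/h$. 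The harmonic-extension device is not a convenience but the mechanism that supplies the needed $h$-dependence, and your proposal has no substitute for it.
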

The proof is based on the sharp inequality for harmonic functions
\cite[Lemma~4.3]{grha14}, \cite[Theorem~1.1]{haru14} which we formulate here for the sake of completeness.
\begin{lemma}
  \label{lem:4.1}
Let $R_h=(-\frac{h}{2},\frac{h}{2})\times(0,p).$
Suppose $w\in H^{1}(R_{h})$ is
harmonic in $R_{h}$ and satisfies one of the conditions:
\begin{itemize}
\item[(i)] $w(x,0)=w(x,p)$ or in the sense of traces,
\item[(ii)] $w(x,0)=0$ or in the sense of traces,
\item[(iii)] $w(x,p)=0$ or in the sense of traces.
\end{itemize}
 Then
\begin{equation}
  \label{4.22}
\|w_{y}\|^{2}\le\frac{2\sqrt{3}}{h}\|w\|\|w_{x}\|+\|w_{x}\|^{2}.
\end{equation}
\end{lemma}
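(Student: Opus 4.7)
The plan is to diagonalize the problem by expanding $w$ in an orthonormal Fourier basis in the $y$-variable that is adapted to the boundary condition in force. In case (i), where (together with the periodicity inherited from the application in Lemma~\ref{lem:thetat}) $w$ is $p$-periodic in $y$, the basis is $\phi_{n}(y)=e^{2\pi iny/p}/\sqrt{p}$, $n\in\bb{Z}$, with eigenvalues $\mu_{n}^{2}=(2\pi n/p)^{2}$ of $-d^{2}/dy^{2}$. In cases (ii) and (iii) the given Dirichlet condition at one horizontal edge combines with this same periodicity structure to force a Dirichlet condition at the opposite edge as well, so the natural basis is $\phi_{n}(y)=\sqrt{2/p}\sin(n\pi y/p)$ with $\mu_{n}=n\pi/p$, $n\ge 1$. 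Writing $w(x,y)=\sum_{n}c_{n}(x)\phi_{n}(y)$, the harmonicity $\Delta w=0$ in $R_{h}$ becomes, for each nonzero mode, the ODE $c_{n}''(x)=\mu_{n}^{2}c_{n}(x)$ on $[-h/2,h/2]$, whose general solution is $c_{n}(x)=A_{n}\cosh(\mu_{n}x)+B_{n}\sinh(\mu_{n}x)$. The lone zero mode that arises in case (i) at $n=0$ contributes nothing to $\|w_{y}\|^{2}$ and may be discarded.

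By Parseval's identity, the claim reduces to the per-mode one-dimensional statement: for every $\mu>0$ and every $c(x)=A\cosh(\mu x)+B\sinh(\mu x)$ on $[-h/2,h/2]$,
\[
\mu^{2}\|c\|^{2}\le\frac{2\sqrt{3}}{h}\,\|c\|\,\|c'\|+\|c'\|^{2}.
\]
A direct computation using $\int_{-h/2}^{h/2}\cosh^{2}(\mu x)\,dx=\frac{h}{2}+\frac{\sinh(\mu h)}{2\mu}$, $\int_{-h/2}^{h/2}\sinh^{2}(\mu x)\,dx=-\frac{h}{2}+\frac{\sinh(\mu h)}{2\mu}$, and $\int_{-h/2}^{h/2}\cosh(\mu x)\sinh(\mu x)\,dx=0$ yields, with $X:=|A|^{2}+|B|^{2}$ and $Y:=|A|^{2}-|B|^{2}$ (so $|Y|\le X$),
\[
\|c\|^{2}=\frac{X\sinh(\mu h)}{2\mu}+\frac{Yh}{2},\quad \|c'\|^{2}=\mu^{2}\!\left(\frac{X\sinh(\mu h)}{2\mu}-\frac{Yh}{2}\right),\quad \mu^{2}\|c\|^{2}-\|c'\|^{2}=\mu^{2}Yh.
\]
If $Y\le 0$ the claim is immediate. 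If $Y>0$, squaring the remaining inequality $\mu^{2}Yh\le(2\sqrt{3}/h)\|c\|\|c'\|$ and invoking $Y^{2}\le X^{2}$ reduces everything, after routine algebra, to the purely algebraic bound $t^{2}(t^{2}+3)\le 3\sinh^{2}t$ at $t=\mu h\ge 0$.

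The verification of this sharp algebraic inequality is the crux of the argument and is where the constant $2\sqrt{3}$ originates. I would establish it by observing that $g(t):=3\sinh^{2}t-t^{2}(t^{2}+3)$ satisfies $g^{(k)}(0)=0$ for $k=0,1,\dots,5$, while $g^{(6)}(t)=96\cosh(2t)\ge 96$ everywhere; successive integration then gives $g^{(k)}(t)>0$ for $t>0$ and each $k\in\{0,\dots,5\}$, so in particular $g(t)>0$ as required. The global inequality then follows from the per-mode one by summing in $n$ and applying the Cauchy--Schwarz inequality in the index $n$ to upgrade the products $\|c_{n}\|\|c_{n}'\|$ to $\|w\|\|w_{x}\|$. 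I expect the main obstacle to be this clean identification of the sharp constant via $g$, together with the rigorous handling of the Fourier expansions under the trace formulations of (i)--(iii).
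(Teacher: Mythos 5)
Your per-mode algebra is correct, and the reduction to $t^{2}(t^{2}+3)\le 3\sinh^{2}t$ at $t=\mu h$, verified via $g(t)=3\sinh^{2}t-t^{4}-3t^{2}$ with $g^{(k)}(0)=0$ for $0\le k\le 5$ and $g^{(6)}(t)=96\cosh(2t)\ge 96$, is a clean way to extract the sharp constant $2\sqrt{3}$. The paper does not prove this lemma itself but defers to earlier work, so there is no in-paper argument to compare against, though a diagonal Fourier expansion is very likely the route those references take as well.

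The genuine gap is the justification of the expansion $w=\sum_{n}c_{n}(x)\phi_{n}(y)$. That expansion needs either full $p$-periodicity of $w$ \emph{and} $w_{y}$ in $y$ (for the exponential basis) or Dirichlet data at \emph{both} $y=0$ and $y=p$ (for the sine basis), and conditions (i)--(iii) as written grant neither: (i) equates only the traces of $w$, not of $w_{y}$, while (ii) and (iii) each constrain a single horizontal edge. A general harmonic $w$ satisfying just (i) splits into a piece expandable in $e^{2\pi i n y/p}$ \emph{plus} a piece carried by the (equal) short-edge data, which lives in the $\sin\bigl(k\pi(x+h/2)/h\bigr)$ basis in $x$ and in hyperbolic functions in $y$; your argument says nothing about that second piece. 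Your assertion that in (ii) and (iii) the one Dirichlet condition ``combines with this same periodicity structure to force a Dirichlet condition at the opposite edge'' imports a periodicity that those hypotheses do not contain. Indeed, $w(x,y)=y$ is harmonic and satisfies (ii), yet $\|w_{y}\|^{2}=hp>0$ while $\|w_{x}\|=0$, so the inequality fails as literally stated --- the hypotheses evidently must be read as full periodicity in (i) and vanishing on both short edges in place of (ii)/(iii) (consistent with the odd-reflection construction in the remark after the lemma), which is exactly what your expansion requires. A correct write-up must state those stronger hypotheses explicitly rather than inherit them silently from the application, and should also note that the harmonic extension used in the Korn proof must then be constructed (for instance, by solving the Dirichlet problem on the periodic cylinder rather than on the cut rectangle) so that it actually satisfies them, since solving on the cut rectangle makes the trace of $w$ periodic but not automatically the trace of $w_{y}$.
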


\begin{remark}
For boundary conditions (\ref{3.8}), the $p-$periodicity of the $u$ component of $\BGf$ in Theorem~\ref{th:thetat} must be replaced by 
either of the condition $v(x,0)=0$ or $v(x,p)=0$ for all $x\in\left[-\frac{h}{2},\frac{h}{2}\right].$ Then one must apply Lemma~\ref{4.1}
to the extended version $\bar\BGf$ of the displacement $\BGf$ as follows:
$$u(x,-y)=u(x,-y), v(x,y)=-v(x,-y)\quad\text{for the condition}\quad v(x,0)=0,$$
$$u(x,2p-y)=u(x,-y), v(x,y)=-v(x,2p-y)\quad\text{for the condition}\quad v(x,p)=0.$$
Then the extended displacement $\bar\BGf$ satisfies the requirements of Theorem~\ref{th:thetat}.
\end{remark}
\begin{proof}[Proof of Theorem~\ref{th:thetat}]
  The first step is to replace $u(x,y)$ by its harmonic extension in $R_{h}$
  by defining $w\in H^{1}(R_{h})$ to be the unique solution
of the Dirichlet boundary value problem
\begin{equation}
\label{4.26}
\begin{cases}
\triangle w=0, &  (x,y)\in R_h\\
w=u, & (x,y)\in \partial R_h.
\end{cases}
\end{equation}
By the Poincar\'e inequality,
\begin{equation}
\label{4.27}
\|u-w\|\leq h \|\nabla(u-w)\|.
\end{equation}
Next, we express $\triangle(u-w)=\triangle u$ in terms of $\BE(x,y)$, defined
in (\ref{GEdef}), by eliminating all derivatives, except $u_{,y}$:
\begin{equation}
  \label{trianglewins}
  \triangle(u-w)=E_{11,x}+\frac{2E_{12,y}}{a_{1}}-\nth{a_{1}a_{2}}(E_{22,x}
+b_{2}E_{11}-b_{1}E_{22})+R(x,y),
\end{equation}
where
\[
R(x,y)=\frac{b_{1}b_{2}u-a_{2}a_{1}'u_{,y}-a_{2}b_{1}'v}{a_{1}a_{2}}.
\]
Now we multiply (\ref{trianglewins}) by $u-w$ and integrate by parts over
$R_{h}$ using the fact that $u-w$ vanishes on $\Md R_{h}$:
\[
\|\nabla(u-w)\|^{2}=E_{11}(u_{,x}-w_{,x})+\frac{2E_{12}(u_{,y}-w_{,y})}{a_{1}}-\frac{E_{22}(u_{,x}-w_{,x})}{a_{1}a_{2}}+(u-w)Q(x,y),
\]
where $Q(x,y)$ is a linear combination of $E_{11}$, $E_{12}$, $E_{22}$, $u$, $v$ and $u_{,y}$ with
uniformly bounded coefficients. Estimating $\|u-w\|$ by
(\ref{4.27}) we obtain, after division by $\|\nabla(u-w)\|$,
\begin{equation}
  \label{gradbd}
  \|\nabla(u-w)\|\leq Ch\left(\frac{\|\BE\|}{h}+\|u\|+\|v\|+\|u_{,y}\|\right).
\end{equation}
Our last task is to estimate $\|u_{,y}\|$. This is done by replacing $u$ with
$w$, estimating $\|w_{,y}\|$ using (\ref{4.22}), and returning back to
$u$, while controlling the incurred errors by (\ref{4.27}) and (\ref{gradbd}).
\begin{align*}
\|u_{,y}\|^2&\leq 2\|u_{,y}-w_{,y}\|^2+2\|w_{,y}\|^2
\leq C\left(\|\nabla(u-w)\|^2+\frac{\|w\|\|w_{,x}\|}{h}+\|w_{,x}\|^2\right)\\
&\leq C\left(\|\nabla(u-w)\|^2+\|u_{,x}\|^{2}+\nth{h}(\|u\|+h\|\nabla(u-w)\|)
(\|u_{,x}\|+\|\nabla(u-w)\|)\right)\\
&\leq C\left(\|\nabla(u-w)\|^2+\|u_{,x}\|^{2}+\frac{\|u\|\|u_{,x}\|}{h}
+\frac{\|u\|\|\nabla(u-w)\|}{h}\right)\\
&\le C\left(h^{2}\|u_{,y}\|^{2}+\|\BE\|^{2}+\frac{\|u\|\|\BE\|}{h}
+\|u\|^{2}+\|v\|^{2}+\|u\|\|u_{,y}\|\right),
\end{align*}
where we took into account that $u_{,x}=E_{11}$. Estimating
\[
C\|u\|\|u_{,y}\|\le\hf\|u_{,y}\|^{2}+\frac{C^{2}\|u\|^{2}}{2}
\]
and choosing $h$ so small that $Ch^{2}<1/4$ we obtain the inequality
\begin{equation}
  \label{uybd}
  \|u_{,y}\|^2\le C\left(\|\BE\|^{2}+\frac{\|u\|\|\BE\|}{h}+\|u\|^{2}+\|v\|^{2}\right),
\end{equation}
which holds for all sufficiently small $h>0$. Observing that
\[
\|\BG\|^{2}\le\|E_{11}\|^{2}+\|E_{22}\|^{2}+\|G_{12}\|^{2}+\|2E_{12}-G_{12}\|^{2},
\]
we get the bound
\[
\|\BG\|^{2}\le 7\|\BE\|^{2}+3\|G_{12}\|^{2},
\]
while
\[
\|G_{12}\|^{2}=\|a_{1}(y)u_{,y}+b_{1}(y)v\|^{2}\le C(\|u_{,y}\|^2+\|v\|^{2}).
\]
This shows that (\ref{uybd}) implies (\ref{thetat2}).
\end{proof}

\section{Proof of Theorem~\ref{th:3.3}}
\setcounter{equation}{0}

The ansatz in part (i) of the theorem is a classical Kirchhoff ansatz. The
assumptions of part (i) say that the shell contains a plate, which means that
we can introduce a local Cartesian coordinate system $(x_{1},x_{2},x_{3})$ in
which the (sub)plate be described as
\[
\CP_{h}=\{(x_{1},x_{2})\in\GO\subset\bb{R}^{2},\ x_{3}\in I_{h}\}.
\]
In these Cartesian coordinates we construct the ansatz in terms of the function
$\phi(x_{1},x_{2})$, compactly supported in $\GO$:
\begin{equation}
  \label{Kirchans}
  \begin{cases}
  u^{h}_{1}=-x_{3}\phi_{,x_{1}},\\
u^{h}_{2}=-x_{3}\phi_{,x_{2}},\\
u^{h}_{3}=\phi(x_{1},x_{2}).
\end{cases}
\end{equation}
Then
\[
\Grad\Bu=
\begin{bmatrix}
 -x_{3}\phi_{,x_{1}x_{1}} & -x_{3}\phi_{,x_{1}x_{2}} & -\phi_{,x_{1}}\\
 -x_{3}\phi_{,x_{1}x_{2}} & -x_{3}\phi_{,x_{2}x_{2}} & -\phi_{,x_{2}}\\
 \phi_{,x_{1}} & \phi_{,x_{2}} & 0
\end{bmatrix},\qquad
e(\Bu)=\begin{bmatrix}
 -x_{3}\phi_{,x_{1}x_{1}} & -x_{3}\phi_{,x_{1}x_{2}} & 0\\
 -x_{3}\phi_{,x_{1}x_{2}} & -x_{3}\phi_{,x_{2}x_{2}} & 0\\
 0 & 0 & 0
\end{bmatrix}.\qquad
\]
This shows that
\[
\|\Grad\Bu^{h}\|^{2}=\|\Grad\phi\|^{2}+\frac{h^{2}}{12}\|\Grad\Grad\phi\|^{2},\qquad
\|e(\Bu^{h})\|^{2}=\frac{h^{2}}{12}\|\Grad\Grad\phi\|^{2}.
\]
Choosing fixed non-zero $\phi\in C_{0}^{2}(\GO)$ we establish
(\ref{Kirch}). The ansatz (\ref{Kirchans}) was found by looking for the ansatz
in the form $\Bu^{h}=\Bv(x_{1},x_{2})+x_{3}\Bw(x_{1},x_{2})$. We then
compute $e(\Bu^{h})=\BE_{0}(x_{1},x_{2})+x_{3}\BE_{1}(x_{1},x_{2})$. The
ansatz (\ref{Kirchans}) is the general solution of the equations
$\BE_{0}(x_{1},x_{2})=0$. The same idea could be applied to $\grad\Bu$, given
by (\ref{gradk0}). However, the different structure of the gradient results
only in trivial solutions of $\BE_{0}(\Gth,z)=0$. Nevertheless we stick with
the same idea, looking for an ansatz in the form
$
\Bu^{h}=\Bv^{h}(\Gth,z)+t\Bw^{h}(\Gth,z).
$
Then,
\[
(\grad\Bu^{h})_{\rm sym}=\BE_{0}^{h}(\Gth,z)+t\BE_{1}^{h}(\Gth,z)+O(t^{2}).
\]
Now, instead of solving $\BE_{0}^{h}(\Gth,z)=0$ we demand that all components
of $\BE_{0}^{h}(\Gth,z)$ be zero, \emph{except the $zz$-component}.
In accordance with this strategy we have the following system of equations
\begin{equation}
  \label{ansyst}
  \begin{cases}
    w^{h}_{t}=0,\\
    w^{h}_{\Gth}=-\nth{A_{\Gth}}\left(\dif{v^{h}_{t}}{\Gth}+c(\Gth)v^{h}_{\Gth}\right),\\[2ex]
    w^{h}_{z}=-\nth{A_{z}}\dif{v^{h}_{t}}{z},\\[2ex]
    v^{h}_{t}=-\dfrac{v^{h}_{\Gth,\Gth}+a(\Gth)v^{h}_{z}}{c(\Gth)},\\[2ex]
    -A_{\Gth}v^{h}_{\Gth,z}=A_{z}(v^{h}_{z,\Gth}-a(\Gth)v^{h}_{\Gth}).
  \end{cases}
\end{equation}
The first four equations in (\ref{ansyst}) express $\Bw^{h}$ and $v^{h}_{t}$
in terms of $v^{h}_{\Gth}$ and $v^{h}_{z}$. The last equation relates
$v^{h}_{\Gth}$ and $v^{h}_{z}$, and needs to be solved. There are two mutually
exclusive cases
\begin{itemize}
\item Case 1:
\begin{equation}
  \label{sepvar}
  \frac{A_{z}}{A_{\Gth}}=\frac{H(\Gth)}{G(z)},
\end{equation}
for some Lipschitz functions $H(\Gth)$ and $G(z)$. It is easy to see from
formulas (\ref{As}) that
(\ref{sepvar}) is equivalent to $a(\Gth)$ and $b(\Gth)$ being linearly dependent,
i.e. there exists a constant scalar $\Gl_{0}$, such that either
$a(\Gth)=\Gl_{0}b(\Gth)$ or $b(\Gth)=\Gl_{0}a(\Gth)$.
\item Case 2: There exists an interval $I=(\Gth_{1},\Gth_{2})\subset(0,p)$,
  such that $a(\Gth)\not=0$ and $\rho'(\Gth)\not=0$ for all $\Gth\in I$, where
\[
\rho(\Gth)=\frac{b(\Gth)}{a(\Gth)}.
\]
\end{itemize}

\textbf{Case 1.} It is easy to see from Table~\ref{tab:cylcone} that all
cylinders and cones fall into this case. Under the assumption (\ref{sepvar})
the last equation in (\ref{ansyst}) has a general solution
\begin{equation}
  \label{gensol}
  v^{h}_{z}=A_{\Gth}G(z)H(\Gth)\phi^{h}_{,z},\qquad v^{h}_{\Gth}=-A_{\Gth}H(\Gth)^{2}\phi^{h}_{,\Gth},
\end{equation}
where $\phi^{h}(z,\Gth)$ can be an arbitrary function with compact support.

\textbf{Case 2.} In this case we will assume that functions $a(\Gth)$ and
$b(\Gth)$ are of class $C^{3}$. Solving the last equation in (\ref{ansyst})
with respect to $v^{h}_{z,\Gth}$
\[
v^{h}_{z,\Gth}=\nth{B'(z)}(a(\Gth)B'(z)v^{h}_{\Gth}-(a(\Gth)B(z)+b(\Gth))v^{h}_{\Gth,z})
\]
we see that we need both $a(\Gth)v^{h}_{\Gth}$ and $b(\Gth)v^{h}_{\Gth}$ to be
$\Gth$-derivatives of some $\Gth$-periodic smooth functions of
$(\Gth,z)$. Hence, we define
\[
v^{h}_{\Gth}=\frac{\psi^{h}_{,\Gth}}{a(\Gth)},
\]
where $\psi^{h}(\Gth,z)$ is supported on $I\times(L_{-},L_{+})$. But then, we also
need that $\rho(\Gth)\psi^{h}_{,\Gth}$ be a $\Gth$-derivative of some
$\Gth$-periodic smooth function of $(\Gth,z)$. We then define
\[
\psi^{h}=\frac{\phi^{h}_{,\Gth}}{\rho'(\Gth)},
\]
where $\phi^{h}(\Gth,z)$ is supported on $I\times(L_{-},L_{+})$. Then, we have
\[
\rho\psi^{h}_{,\Gth}=(\rho\psi^{h})_{,\Gth}-\rho'\psi^{h}=(\rho\psi^{h}-\phi^{h})_{,\Gth}.
\]
Hence,
\begin{equation}
  \label{vthz}
  \begin{cases}
  v^{h}_{\Gth}=\nth{a(\Gth)}\dif{}{\Gth}\left(\dfrac{\phi^{h}_{,\Gth}}{\rho'(\Gth)}\right),\\[2ex]
  v^{h}_{z}=\dfrac{B'(z)\phi^{h}_{,\Gth}+\rho'(\Gth)\phi^{h}_{,z}-
      (B(z)+\rho(\Gth))\phi^{h}_{,\Gth z}}{B'(z)\rho'(\Gth)}
  \end{cases}.
\end{equation}

Finally, in order to obtain optimal upper bound on the Korn constant we use
the same scaling analysis as in \cite{grha14} and define $\phi^{h}(\Gth,z)$ in
terms of the smooth, non-constant $p$-periodic in $\Gth$ function
$\Phi(\Gth,z)$. In Case 1 we just set
\[
\phi^{h}(\Gth,z)=\Phi(n(h)\Gth,z),
\]
where $n(h)$ is the integer part of $h^{-1/4}$. In Case 2 we define
\[
\phi^{h}(\Gth,z)=\eta(\Gth,z)\Phi(n(h)\Gth,z),
\]
where $\eta(\Gth,z)$ is a smooth $p$-periodic in $\Gth$ function, supported on $I\times(L_{-},L_{+})$.
In both cases the constructed ansatz
yields the upper bound $K(V_{h})\le Ch^{3/2}$ for any $V_{h}$ containing
$V_{h}^{1}\cap V^{2}_{h}$.

\section*{Acknowledgements.}
D.H. is grateful to Graeme Milton and the University of Utah for support.
This material is based upon work supported by the National Science Foundation
under Grants No. 1412058.

% BIBLIOGRAPHY
\bibliographystyle{abbrv}
\bibliography{refs}

\end{document}